\documentclass[11pt]{article}
\usepackage{amsmath,amssymb,amsfonts}
\usepackage{mathrsfs,mathtools,bm}
\usepackage{graphicx}
\usepackage{tikz}
\usetikzlibrary{calc}
\usetikzlibrary{positioning}

\usepackage{xcolor}
\usepackage{geometry}
\geometry{left=33mm,right=33mm,
top=30mm,bottom=30mm}

\usepackage[colorlinks=true,
linkcolor=blue,citecolor=blue,
urlcolor=blue]{hyperref}

\makeatletter
\def\@seccntDot{.}
\def\@seccntformat#1{\csname the#1\endcsname\@seccntDot\hskip 0.5em}
\renewcommand\section{\@startsection{section}{1}{\z@}%
{18\p@ \@plus 6\p@ \@minus 3\p@}%
{9\p@ \@plus 6\p@ \@minus 3\p@}%
{\large\bfseries\boldmath}}
\renewcommand\subsection{\@startsection{subsection}{2}{\z@}%
{12\p@ \@plus 6\p@ \@minus 3\p@}%
{3\p@ \@plus 6\p@ \@minus 3\p@}%
{\bfseries\boldmath}}
\renewcommand\subsubsection{\@startsection{subsubsection}{3}{\z@}%
{12\p@ \@plus 6\p@ \@minus 3\p@}%
{\p@}%
{\bfseries\boldmath}}
\makeatother

\usepackage{microtype}


\usepackage[amsmath,thmmarks]{ntheorem}
\theoremstyle{plain}
\newtheorem{theorem}{Theorem}[section]
\newtheorem{lemma}{Lemma}[section]
\newtheorem{corollary}{Corollary}[section]

\theorembodyfont{\normalfont}

\newtheorem{remark}{Remark}[section]

\newtheorem{claim}{Claim}[section]

\theoremstyle{nonumberplain}
\theoremseparator{}
\theoremsymbol{\ensuremath{\Box}}
\newtheorem{proof}{\it Proof.}


\numberwithin{equation}{section}
\allowdisplaybreaks
\parindent=18pt

\begin{document}

\title{The extremal $p$-spectral radius of Berge-hypergraphs}
\author{
Liying Kang
\thanks{Department of Mathematics, Shanghai University, Shanghai 200444, PR China
({\tt lykang@i.shu.edu.cn})}
\and Lele Liu
\thanks{Department of Mathematics, Shanghai University, Shanghai 200444, PR China
({\tt ahhylau@gmail.com})}
\and Linyuan Lu
\thanks{Department of Mathematics, University of South Carolina, Columbia, SC 29208,
USA ({\tt lu@math.sc.edu})}
\and Zhiyu Wang
\thanks{Department of Mathematics, University of South Carolina, Columbia, SC 29208,
USA ({\tt zhiyuw@email.sc.edu})}}

\maketitle

\begin{abstract}
Let $G$ be a graph. We say that a hypergraph $H$ is a Berge-$G$ if there is a bijection
$\phi: E(G)\to E(H)$ such that $e\subseteq \phi(e)$ for all $e\in E(G)$. For any $r$-uniform
hypergraph $H$ and a real number $p\geq 1$, the $p$-spectral radius
$\lambda^{(p)}(H)$ of $H$ is defined as
\[
\lambda^{(p)}(H):=\max_{\bm{x}\in\mathbb{R}^n,\,||\bm{x}||_p=1}
r\sum_{\{i_1,i_2,\ldots,i_r\}\in E(H)} x_{i_1}x_{i_2}\cdots x_{i_r}.
\]
In this paper, we study the $p$-spectral radius of Berge-$G$ hypergraphs.
We determine the $3$-uniform hypergraphs with maximum $p$-spectral
radius for $p\geq 1$ among Berge-$G$ hypergraphs when $G$ is a path, a cycle or a star.
\par\vspace{2mm}

\noindent{\bfseries Keywords:} Uniform hypergraph; $p$-spectral radius; Berge-hypergraph.
\par\vspace{2mm}

\noindent{\bfseries AMS classification:} 05C65; 15A18.
\end{abstract}

\section{Introduction}
\label{sec1}

Let $G$ be a graph and $H$ be a hypergraph. The hypergraph $H$ is a {\em Berge-$G$} if there is
a bijection $\phi: E(G)\to E(H)$ such that $e\subseteq \phi(e)$ for all $e\in E(G)$. Alternatively,
$H$ is a Berge-$G$ if we can embed each edge of $G$ into each edge of $H$. Note that for a graph
$G$ there are in general many non-isomorphic hypergraphs which are Berge-$G$. An {\em $r$-uniform hypergraph}
or simply $r$-graph is a hypergraph such that all its edges have size $r$. We denote the family of
all $r$-uniform hypergraphs which are Berge-$G$ by $\mathcal{B}_r(G)$.

Recently, extremal problems for Berge-$G$ hypergraphs have attracted the attention of a lot of
researchers. Among those are extremal size of hypergraphs on $n$ vertices that contain no subhypergraph
from $\mathcal{B}_r(G)$, see for example \cite{Davoodi, Ergemlidze, FKL, FKL2, GMP, GerbnerPalmer2017, Gyori2006, GKL}. In addition, Ramsey
numbers for Berge-$G$ hypergraphs have been considered in
\cite{AxenovichGyarfas2018, GMOV, GyarfasSarkozy2011,SaliaTompkinsWangZamora2018}.

In this paper, we study the $p$-spectral radius of Berge-$G$ hypergraphs. The $p$-spectral radius
was introduced by Keevash, Lenz and Mubayi \cite{Keevash2014} and subsequently studied by Nikiforov
\cite{Nikiforov2014:analytic-methods,Nikiforov2014:extremal-problems,KangNikiforov2014}
and Chang et al. \cite{ChangDing2018}. Let $H$ be an $r$-uniform hypergraph of order $n$, the
polynomial form of $H$ is a multi-linear function $P_H(\bm{x}): \mathbb{R}^n\to\mathbb{R}$ defined
for any vector $\bm{x}=(x_1,x_2,\ldots,x_n)^{\mathrm{T}}\in\mathbb{R}^n$ as
\[
P_H(\bm{x})=r\sum_{\{i_1,i_2,\ldots,i_r\}\in E(G)}x_{i_1}x_{i_2}\cdots x_{i_r}.
\]
For any real number $p\geq 1$, the {\em $p$-spectral radius} of $H$ is defined as%
\footnote{We modified the definition of $p$-spectral radius by removing a constant
factor $(r-1)!$ from \cite{Keevash2014}, so that the $p$-spectral radius
is the same as the one in \cite{Cooper2012} when $p=r$. This is not essential
and does not affect the results at all.}
\begin{equation}\label{eq:definition-p-spectral-radius}
\lambda^{(p)}(H):=\max_{||\bm{x}||_p=1}P_H(\bm{x}),
\end{equation}
where $||\bm{x}||_p:=(|x_1|^p+|x_2|^p+\cdots+|x_n|^p)^{1/p}$.

For any real number $p\geq 1$, we denote $\mathbb{S}_{p,+}^{n-1}$ (resp. $\mathbb{S}_{p,++}^{n-1}$)
the set of all nonnegative (resp. positive) real vectors $\bm{x}\in\mathbb{R}^n$ with
$||\bm{x}||_p=1$. If $\bm{x}\in\mathbb{R}^n$ is a vector with $||\bm{x}||_p=1$ such that
$\lambda^{(p)}(H)=P_H(\bm{x})$, then $\bm{x}$ is called an {\em eigenvector} corresponding to $\lambda^{(p)}(H)$.
Note that $P_H(\bm{x})$ can always reach its maximum at some nonnegative vectors. By Lagrange's
method, we have the {\em eigenequations} for $\lambda^{(p)}(H)$ and $\bm{x}\in\mathbb{S}_{p,+}^{n-1}$
as follows:
\begin{equation}\label{eq:eigenequation}
\sum_{\{i,i_2,\ldots,i_r\}\in E(H)}x_{i_2}\cdots x_{i_r}=\lambda^{(p)}(H)x_i^{p-1}
~~\text{for}\ x_i>0.
\end{equation}

Note that the $p$-spectral radius $\lambda^{(p)}(H)$ shows remarkable connections with some hypergraph
invariants. For instance, $\lambda^{(1)}(H)/r$ is the Lagrangian of $H$ (see
\cite{Talbot2002,Lu2018:maximum-p-spectral-radius,GruslysLetzter2018}), $\lambda^{(r)}(H)$ is the usual
spectral radius introduced by Cooper and Dutle \cite{Cooper2012}, and $\lambda^{(\infty)}(H)/r$ is the
number of edges of $H$ (see \cite{Nikiforov2014:analytic-methods}).

To state our results precisely, we start with some basic definitions and notations. For graphs we denote
by $P_k$ the path on $k$ vertices, by $C_k$ the cycle on $k$ vertices and by $S_k$ the star with $k$
vertices. We also denote by $S_{k-1}^+$ the graph on $(k-1)$ vertices obtained from $S_{k-1}$ by adding
an edge (see Fig.\,\ref{fig:Sk^+}). For $r>s\geq 2$, let $R$ be a set of $(r-s)$ vertices and $H$ be a
$s$-uniform hypergraph. A {\em suspension} of $H$, denoted by $H\ast R$, is an $r$-uniform hypergraph
with vertex set $V(H)\cup R$ and edge set $\{e\cup R: e\in E(H)\}$. We call $H$ the {\em core} of $H\ast R$.

Let $\Delta_1$ be the graph on $(k-1)$ vertices obtained from a triangle by attaching two pendent paths
with lengths differing by at most one at a vertex of the triangle (see Fig.\,\ref{fig:Delta1}). Also,
we let $\Delta_2$ be the graph on $(k-1)$ vertices obtained from a triangle and a $(k-3)$-cycle by
identifying a vertex in $C_3$ and a vertex in $C_{k-3}$ (see Fig.\,\ref{fig:Delta2}). We can now
state our main results.

\begin{figure}[htbp]
\begin{minipage}{0.45\textwidth}
  \centering
  \begin{tikzpicture}[scale=0.9]
    \foreach \i in {-3,-2,...,3}
    \filldraw (\i,0) circle (0.06);
    \filldraw (240:1) circle (0.06);
    \filldraw (300:1) circle (0.06);
    \draw (-3,0)--(-2,0);
    \draw (-1,0)--(0,0)--(1,0);
    \draw (2,0)--(3,0);
    \draw (0,0)--(240:1)--(300:1)--(0,0);
    \node at (-1.5,0) {$\cdots$};
    \node at (1.5,0) {$\cdots$};
    \node at (-2,0.6) {$\overbrace{\hspace{18mm}}^{\left\lfloor\frac{k-4}{2}\right\rfloor}$};
    \node at (2,0.6) {$\overbrace{\hspace{18mm}}^{\left\lceil\frac{k-4}{2}\right\rceil}$};
  \end{tikzpicture}\vspace{2.9mm}
  \caption{\label{fig:Delta1}Graph $\Delta_{1}$}
\end{minipage}
\begin{minipage}{0.23\textwidth}
  \centering
  \begin{tikzpicture}[scale=0.9]
    \foreach \i in {-5,-4,...,1}
    {
    \coordinate (u\i) at (10+50*\i:0.9);
    \filldraw (u\i) circle (0.06);
    }
    \draw (u-5)--(u-4)--(u-3)--(u-2)--(u-1)--(u0)--(u1);
    \filldraw (80:0.9) circle (0.015);
    \filldraw (90:0.9) circle (0.015);
    \filldraw (100:0.9) circle (0.015);
    \filldraw (-0.5,-1.6) circle (0.06);
    \filldraw (0.5,-1.6) circle (0.06);
    \draw (u-2)--(-0.5,-1.6)--(0.5,-1.6)--(u-2);
  \end{tikzpicture}\vspace{-2mm}
  \caption{\label{fig:Delta2}Graph $\Delta_2$}
\end{minipage}
\begin{minipage}{0.3\textwidth}
  \centering
  \begin{tikzpicture}[scale=0.9]
    \foreach \i in {0,1,3,4}
    {
    \coordinate (u\i) at (30+30*\i:1.4);
    \filldraw (u\i) circle (0.06);
    \draw (0,0)--(u\i);
    }
    \filldraw (0,0) circle (0.06);
    \filldraw (-60:1) circle (0.06);
    \filldraw (-120:1) circle (0.06);
    \draw (0,0)--(-60:1)--(-120:1)--(0,0);
    \node at ($(u1)!0.5!(u3)$) {$\cdots$};
\end{tikzpicture}
\caption{\label{fig:Sk^+}Graph $S_{k-1}^+$}
\end{minipage}
\end{figure}

\begin{theorem}\label{thm:main-result-path}
Let $p\geq 1$, $k\geq 6$ and $H$ be a Berge-$P_k$ $3$-graph. Then
$\lambda^{(p)}(H)\leq\lambda^{(p)}(\Delta_1\ast K_1)$. Furthermore,
if $p>2$, the equality holds if and only if $H\cong \Delta_1\ast K_1$.
\end{theorem}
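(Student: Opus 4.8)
The plan is to start from a nonnegative eigenvector and compare $H$ with $\Delta_1\ast K_1$ in two stages: first concentrate all edges through a single heaviest vertex so as to reduce to a suspension, and then optimize the $2$-uniform core. Fix $\bm x\in\mathbb{S}_{p,+}^{n-1}$ with $P_H(\bm x)=\lambda^{(p)}(H)=:\lambda$, and let $v$ be a vertex with $x_v=\max_i x_i$. Since $H$ is a Berge-$P_k$, it has exactly $k-1$ edges and is connected, because each edge $\phi(e)$ contains an edge $e$ of the embedded path $u_1u_2\cdots u_k$; for each hyperedge $f=\phi(e)$ I call $e\subseteq f$ its \emph{core pair} and $f\setminus e$ its \emph{free vertex}.

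\emph{Stage 1: reduction to a suspension.} For an edge $f=\{a,b,c\}$ with free vertex $c\neq v$, replacing $f$ by $\{a,b,v\}$ changes $P_H(\bm x)$ by $3x_ax_b(x_v-x_c)\ge 0$, so the resulting hypergraph $H'$ satisfies $\lambda^{(p)}(H')\ge P_{H'}(\bm x)\ge P_H(\bm x)=\lambda$. Moreover the core pair $e$ still lies in the new edge, so the bijection $\phi$ survives and $H'$ is again a Berge-$P_k$ --- \emph{provided} the move neither creates a repeated edge nor tries to shrink an edge whose core pair already contains $v$. Controlling these two degeneracies is the first real obstacle: I would handle it by choosing $v$ and the core-path realization coherently, and by arguing that any conflict can be resolved by a compensating relocation that still does not decrease $\lambda^{(p)}$. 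Iterating, I reach a suspension $G\ast K_1$ whose core $G$ is a connected graph with $k-1$ edges and $\lambda^{(p)}(G\ast K_1)\ge\lambda^{(p)}(H)$.

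\emph{Stage 2: optimizing the core.} Because every edge now contains the apex $v$, the polynomial form factors as $P_{G\ast K_1}(\bm x)=3x_v\sum_{\{a,b\}\in E(G)}x_ax_b$, so the problem becomes: over connected graphs $G$ with $k-1$ edges whose suspension is a Berge-$P_k$, maximize this apex-weighted edge sum. I would first show that no vertex of $G$ can be wasted off the core path, forcing $|V(G)|=k-1$ and hence $G$ unicyclic. Then two local transformations, each analysed through the eigenequation \eqref{eq:eigenequation}, drive $G$ to $\Delta_1$: contracting the unique cycle down to a triangle, and a Kelmans-type move that gathers all pendant branches at one triangle vertex and balances the two resulting paths to lengths $\lfloor(k-4)/2\rfloor$ and $\lceil(k-4)/2\rceil$. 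Each move relocates edges toward heavier eigenvector coordinates, so it does not decrease $\lambda^{(p)}$, and a convexity estimate in the branch lengths selects the balanced configuration $\Delta_1$.

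Finally, the inequality $\lambda^{(p)}(H)\le\lambda^{(p)}(\Delta_1\ast K_1)$ follows by chaining the two stages. For the uniqueness claim when $p>2$, I would track when each comparison is an equality: the replacement in Stage 1 is strict unless $x_c=x_v$, and for $p>2$ the eigenequation forces distinct coordinates at structurally distinct vertices, so equality throughout forces $H\cong\Delta_1\ast K_1$; the same strictness fails at $p=2$, which is why uniqueness is asserted only for $p>2$. I expect the genuine difficulty to lie entirely in Stage 1 --- guaranteeing that the weight-concentrating moves can always be performed \emph{inside} the family $\mathcal{B}_3(P_k)$, since relocating the free vertex of an edge whose core pair meets the apex is exactly where the Berge-$P_k$ structure can break.
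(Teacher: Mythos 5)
Your two-stage plan --- concentrate every hyperedge through a single apex vertex, then optimize the resulting $2$-uniform core --- is exactly the strategy of the paper's proof, but as written it defers every step that actually carries the difficulty, so it remains an outline rather than a proof. The most serious gap is the one you yourself flag in Stage 1: the free-vertex relocation cannot be applied to the two hyperedges whose embedded path-edge already contains the apex $v_i$, nor when it would create a repeated edge, and ``a compensating relocation that still does not decrease $\lambda^{(p)}$'' is precisely what must be exhibited. The paper does this in \autoref{thm:intersect-a-vertex} by a case analysis on where $v_{i-2}v_{i-1}$ and $v_{i+1}v_{i+2}$ are embedded and on whether $\{v_{i-1},v_i,v_{i+1}\}$ is already an edge, choosing an explicit re-embedding in each case; and even then the two exceptional hyperedges retain a residual degree of freedom (the third vertices $v_s,v_t$), which is exactly why the core comes out as a cycle with pendent paths rather than a triangle outright, and why a further ``$v_s=v_t$'' argument is needed. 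A second gap: your assertion that no vertex can be wasted off the path, forcing $|V(G)|=k-1$, is not free. An extremal Berge-$P_k$ may a priori still use $k+1$ vertices; eliminating the extra vertex is \autoref{lem:v(H)-leq-v(G)+r-2} together with \autoref{thm:k-vertices-general-G}, which needs $k\geq 6$ (to find a vertex at distance $3$ from the heaviest path vertex), \autoref{thm:delete-vetex-increase-spectral-radius}, and a separate Motzkin--Straus argument when $p=1$. Third, ``contracting the unique cycle down to a triangle does not decrease $\lambda^{(p)}$'' is the paper's \autoref{claim:g=3}, whose proof first establishes the eigenvector inequalities $y_{v_1}<y_{w_2}$ and $y_u>y_{w_1}$ by a three-case analysis before the contract-and-subdivide move can be justified; nothing in your sketch produces these inequalities. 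The final balancing of the two pendent paths is \autoref{thm:G(k,s)>G(k+1,s-1)}, itself an induction along the paths rather than a one-line convexity estimate.

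A smaller correction on uniqueness: what makes the equality analysis work for $p>2$ is not that ``the eigenequation forces distinct coordinates at structurally distinct vertices,'' but that for $p>r-1=2$ the Perron--Frobenius statement (\autoref{thm:positive-vector-p>r-1}) gives a strictly positive eigenvector, so every application of \autoref{thm:move-edges} is a strict increase unless no edge actually moves; chaining strictness through the reductions is what pins down $H\cong\Delta_1\ast K_1$. In short, your skeleton matches the paper's, and you have correctly located where the Berge structure threatens to break, but the proposal supplies none of the arguments at those locations.
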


\begin{theorem}\label{thm:main-result-cycle}
Let $p\geq 1$, $k\geq 6$ and $H$ be a Berge-$C_k$ $3$-graph. Then
$\lambda^{(p)}(H)\leq\lambda^{(p)}(\Delta_2\ast K_1)$. Furthermore,
if $p>2$, the equality holds if and only if $H\cong \Delta_2\ast K_1$.
\end{theorem}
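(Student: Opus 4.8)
The plan is to combine an \emph{edge-pushing} reduction, showing the extremal hypergraph is a suspension over its heaviest vertex, with an explicit optimization over the admissible cores of such suspensions. Throughout I use that a Berge-$C_k$ hypergraph has exactly $k$ edges, since $|E(C_k)|=k$ and $\phi$ is a bijection. Fix an eigenvector $\bm{x}\in\mathbb{S}_{p,+}^{n-1}$ with $P_H(\bm{x})=\lambda^{(p)}(H)$, let $w$ be a vertex of maximum weight, and fix a core cycle $v_1v_2\cdots v_kv_1$ together with the bijection $\phi$, so that each hyperedge has the form $e_i=\{v_i,v_{i+1},u_i\}$ and
\[
\lambda^{(p)}(H)=3\sum_{i=1}^{k}x_{v_i}x_{v_{i+1}}x_{u_i}.
\]

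First I would carry out the edge-pushing step. For every hyperedge $e_i$ whose third vertex satisfies $u_i\neq w$ and $w\notin\{v_i,v_{i+1}\}$, I replace $u_i$ by $w$. Since $x_w$ is maximal this does not decrease any summand, hence does not decrease $P_H(\bm{x})$; moreover the replaced edge still contains its core pair $\{v_i,v_{i+1}\}$, so the Berge-$C_k$ property is preserved once simplicity is checked. If $w$ is an extra vertex, exhausting these moves yields the full suspension $C_k\ast K_1$ on $k+1$ vertices; if $w=v_1$ is a core vertex, the two edges $e_1,e_k$ incident to $v_1$ keep their third vertices $u_1,u_k$ but still contain $w$, so the result is a suspension $G'\ast K_1$ whose core $G'$ (the link of $w$) is a Hamiltonian path $v_2v_3\cdots v_k$ together with two further edges $\{v_2,u_1\}$ and $\{v_k,u_k\}$ at its endpoints. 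In either case $\lambda^{(p)}(H)\le\lambda^{(p)}(G'\ast K_1)$, so it suffices to maximize $\lambda^{(p)}$ over those suspensions whose core renders them Berge-$C_k$.

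Next I would solve this reduced problem via the suspension identity
\[
\lambda^{(p)}(G'\ast K_1)=\max_{||\bm{x}||_p=1}3\,x_w\!\!\sum_{\{a,b\}\in E(G')}\!\!x_ax_b,
\]
which turns the task into choosing the $k$-edge core $G'$ maximizing this functional over the admissible family (the cycle $C_k$, and the Hamiltonian-path-plus-two-endpoint-edges graphs). A clustering argument then drives $G'$ to $\Delta_2$: collapsing the two chords onto a single common vertex that creates a triangle increases the degree concentration and hence the quadratic form $\sum_{\{a,b\}}x_ax_b$ relative to the fixed $\ell_p$-budget, while simultaneously eliminating a vertex. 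This both selects the triangle-plus-$(k-3)$-cycle core $\Delta_2$ and forces the heaviest vertex to be a core vertex, so that the diluted $(k+1)$-vertex candidate $C_k\ast K_1$ is beaten by the $k$-vertex graph $\Delta_2\ast K_1$.

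The main obstacle is twofold. In the pushing step, a collision such as $u_1=v_3$ (which already embeds a triangle $v_1v_2v_3$) can prevent a neighbouring edge from being moved, so a full suspension need not be reached directly; these finitely many obstructed configurations must be analysed on their own and shown to be non-extremal. In the core step, one must verify that the two endpoint edges are genuinely best placed at a common vertex forming a triangle, rather than at distinct vertices, at positions further along the path, or as pendant edges to new vertices, and that the resulting $k$-vertex suspension dominates $C_k\ast K_1$; this requires tracking the eigenvector entries through each local move and confirming that no inequality reverses. Finally, for the uniqueness statement I would revisit the equality cases: each pushing move is strict unless the moved third vertex already equals $w$, and the clustering comparisons are strict precisely when $p>2$, so assembling these strictness conditions pins down $H\cong\Delta_2\ast K_1$ as the unique maximizer.
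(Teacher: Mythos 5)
Your overall skeleton matches the paper's: push third vertices onto a heaviest vertex $w$ via the edge-moving lemma so that the extremal hypergraph becomes a suspension $G'\ast K_1$, and then optimize over admissible cores $G'$. However, the step that carries essentially all of the difficulty --- selecting $\Delta_2$ among the candidate cores --- is replaced in your write-up by a ``clustering/degree-concentration'' heuristic that is not a proof and is not even a correct general principle. Concentrating degree does not by itself increase $\lambda^{(p)}$ of a graph with a fixed number of edges: for $p=1$ the quantity is governed solely by the clique number (Motzkin--Straus), and for general $p$ the paper has to establish specific inequalities between eigenvector entries (e.g.\ that the entry on the cycle beats the entry on the pendant path at the corresponding depth) through a sequence of local edge-switchings, with separate cases according to the pendant-path length, followed by a contract-and-subdivide move; the analogue for the bicyclic cores arising in the cycle case (two cycles sharing a vertex, versus cycles joined by a path, versus a chordal/theta configuration, versus a pendant edge to an extra vertex) is exactly what you would need and do not supply. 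Likewise, the comparison between the $(k+1)$-vertex candidate $C_k\ast K_1$ and the $k$-vertex candidate $\Delta_2\ast K_1$ is asserted as a by-product of clustering, whereas the paper devotes a separate argument to eliminating the extra vertex (using a vertex at distance $3$, which is where the hypothesis $k\geq 6$, i.e.\ girth at least $6$, enters, and using Motzkin--Straus when $p=1$). Without these comparisons the proof does not close.

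Two further points. First, the theorem claims the inequality for all $p\geq 1$, but eigenvector positivity (Perron--Frobenius) is only available for $p>r-1=2$; your plan of ``tracking the eigenvector entries through each local move'' silently assumes positive eigenvectors, so the regime $1\leq p\leq 2$ needs a separate treatment (the paper keeps all inequalities non-strict there and handles $p=1$ via Motzkin--Straus). Second, you correctly flag that collisions such as $u_1=v_3$ obstruct the pushing step and create potential multiple edges, but deferring ``finitely many obstructed configurations'' without enumerating them leaves the reduction to a suspension unproved; the paper's proof of the common-vertex property is precisely a case analysis of these collisions, and it cannot be skipped.
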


\begin{theorem}\label{thm:main-result-star}
Let $p\geq 1$ and $H$ be a Berge-$S_k$ $3$-graph.
\begin{enumerate}
  \item[$(1)$] If $p=1$, then $\lambda^{(1)}(H)\leq\lambda^{(1)}(S_{k-1}^+\ast K_1)=4/27$.

  \item[$(2)$] If $1<p<2$, then $\lambda^{(p)}(H)<\lambda^{(p)}(S_k\ast K_1)$ unless $H=S_k\ast K_1$ for sufficiently large $k$.

  \item[$(3)$] If $p\geq 2$ and $k\geq 11$, then $\lambda^{(p)}(H)<\lambda^{(p)}(S_k\ast K_1)$ unless $H=S_k\ast K_1$.
\end{enumerate}
\end{theorem}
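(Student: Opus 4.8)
The plan is to reduce the whole problem to a graph optimization over the link of the star centre. First I would record the shape of a Berge-$S_k$ $3$-graph $H$. Since every edge of $S_k$ contains its centre, the defining bijection $\phi$ forces a vertex $c\in V(H)$ lying in all $k-1$ edges of $H$, and the images of the $k-1$ distinct leaves give a system of distinct representatives (SDR) for the family $\{e\setminus\{c\}:e\in E(H)\}$ of $2$-element sets. By Hall's theorem such an SDR exists if and only if every subgraph of the link graph $G'$ of $c$ (on $V(H)\setminus\{c\}$, with an edge $\{a,b\}$ for each hyperedge $\{c,a,b\}$) has at most as many edges as vertices, i.e.\ every component of $G'$ has at most one cycle (a pseudoforest). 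Conversely, suspending any $(k-1)$-edge pseudoforest by a common apex, i.e.\ forming $G'\ast K_1$, produces a Berge-$S_k$ $3$-graph, so the two classes correspond exactly. Writing each edge as $\{c,a,b\}$ and noting $G'$ has no isolated vertices gives the identity
\[
P_H(\bm{x})=3\,x_c\sum_{\{a,b\}\in E(G')}x_ax_b=\tfrac{3}{2}\,x_c\,P_{G'}(\bm{x}),
\]
where on the right $\bm{x}$ is restricted to $V(H)\setminus\{c\}$.

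Next I would convert this identity into a formula for $\lambda^{(p)}(H)$. Fixing $x_c=s\ge 0$ and using that $P_{G'}$ is homogeneous of degree $2$, the maximum of the right-hand side over vectors of $p$-norm $(1-s^p)^{1/p}$ on $V(H)\setminus\{c\}$ equals $\tfrac{3}{2}\,s\,(1-s^p)^{2/p}\lambda^{(p)}(G')$; maximizing $s(1-s^p)^{2/p}$ over $s$ (the critical point is $s^p=\tfrac13$) yields
\[
\lambda^{(p)}(H)=C_p\,\lambda^{(p)}(G'),\qquad C_p:=\tfrac{3}{2}\,2^{2/p}3^{-3/p}.
\]
Hence maximizing $\lambda^{(p)}$ over Berge-$S_k$ $3$-graphs is \emph{equivalent} to maximizing $\lambda^{(p)}(G')$ over $(k-1)$-edge pseudoforests, and the extremal hypergraph is the suspension of the extremal pseudoforest. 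For $p=1$ this settles part $(1)$ at once: by the classical Motzkin--Straus theorem $\lambda^{(1)}(G')=1-1/\omega(G')$, where $\omega(G')$ is the clique number, and a pseudoforest contains no $K_4$, so $\omega(G')\le 3$ with equality exactly when $G'$ contains a triangle. Therefore $\lambda^{(1)}(H)\le C_1\cdot\tfrac23=\tfrac{4}{27}$, attained (non-uniquely) by $S_{k-1}^+\ast K_1$, whose link $S_{k-1}^+$ carries a triangle.

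For $p>1$ I would first compute the benchmark value $\lambda^{(p)}(S_k)=2^{1-2/p}(k-1)^{1-1/p}$ by a short optimization from \eqref{eq:eigenequation} (the centre and the leaves of the star each take one common value). It then remains to prove that, once $k$ is large, the star $S_k$ is the unique maximizer of $\lambda^{(p)}$ among $(k-1)$-edge pseudoforests. The strategy has three parts: (i) rule out disconnected pseudoforests, since for $p>1$ a single component is optimal (for $1<p<2$ the weight concentrates on one component, while for $p\ge 2$ a two-component computation still loses a factor $2^{-1/p}$ against the full star); (ii) among connected pseudoforests, use edge-rotation and Perron-eigenvector perturbation arguments to move all pendant edges onto one dominant vertex, so that the only surviving candidates are $S_k$ (the tree case) and the triangle-with-pendants graph $S_{k-1}^+$ (the unicyclic case, with the unique cycle forced down to a triangle); and (iii) compare $\lambda^{(p)}(S_k)$ with $\lambda^{(p)}(S_{k-1}^+)$. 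For $p=2$ this comparison is exact: $\lambda^{(2)}(S_{k-1}^+)$ is the largest root of $\lambda^3-\lambda^2-(k-2)\lambda+(k-4)$, and substituting $\lambda=\sqrt{k-1}=\lambda^{(2)}(S_k)$ yields the margin $\sqrt{k-1}-3$, positive precisely for $k\ge 11$ and vanishing at $k=10$; this is exactly the source of the threshold $k\ge 11$. For general $p\ge 2$ one propagates this by monotonicity in $p$, while for $1<p<2$ one expands both quantities in $k$: they agree to leading order $2^{1-2/p}(k-1)^{1-1/p}$, and a careful lower-order comparison shows the star's advantage (an extra edge at the dominant vertex) beats the triangle's gain (an edge joining two low-weight leaves) once $k$ is large, with the threshold growing as $p\downarrow 1$, hence only ``sufficiently large $k$''.

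The main obstacle is steps (ii)--(iii) of the $p>1$ analysis. The edge-rotation operations must \emph{simultaneously} increase $\lambda^{(p)}$ and preserve the pseudoforest constraint of at most one cycle per component, which is the delicate bookkeeping; and near $p=1$, where the triangle is most valuable, the inequality $\lambda^{(p)}(S_k)>\lambda^{(p)}(S_{k-1}^+)$ holds only for large $k$, so in part $(2)$ the threshold is unavoidably non-explicit. Finally, uniqueness in parts $(2)$ and $(3)$ follows from the strictness of the graph inequalities together with the Perron--Frobenius behaviour of $\lambda^{(p)}$ for $p>1$: strictness forces the extremal link to be exactly $S_k$, and the interior optimum $x_c=3^{-1/p}$ together with the unique positive eigenvector of $G'$ then pins the extremal hypergraph down to $H\cong S_k\ast K_1$.
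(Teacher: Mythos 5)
Your proposal is correct in outline and, at its core, follows the same route as the paper: reduce to the link graph $G'$ of the common apex (so $H=G'\ast K_1$), observe that $G'$ is a tree or a unicyclic graph, identify $S_k$ and $S_{k-1}^+$ as the only surviving candidates, and then compare $\lambda^{(p)}(S_k)$ with $\lambda^{(p)}(S_{k-1}^+)$. Your comparison step is exactly the paper's Lemma~\ref{lem:Sn>Sn^+}: the cubic $\lambda^3-\lambda^2-(k-2)\lambda+(k-4)$ evaluated at $\sqrt{k-1}$ gives the margin $\sqrt{k-1}-3$ (hence the threshold $k\ge 11$ and the equality at $p=2$, $k=10$), monotonicity of $(\lambda^{(p)}/(rm))^p$ in $p$ handles $p>2$, and an asymptotic expansion handles $1<p<2$, which is why that case only holds for large $k$. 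Where you genuinely diverge is the structural reduction. The paper gets ``tree or unicyclic'' from a vertex-counting argument (Lemma~\ref{lem:v(H)-leq-v(G)+r-2} and Claim~\ref{claim:v(H)=k-k+1}) and imports the suspension formula from Lemma~\ref{lem:H-vee-K1}; you instead characterize Berge-$S_k$ $3$-graphs exactly as suspensions of $(k-1)$-edge pseudoforests via Hall's theorem applied to the system of distinct representatives formed by the leaves, and you rederive the constant $C_p=\tfrac32\,2^{2/p}3^{-3/p}$ by optimizing over $x_c$ directly. This is cleaner and buys you something the paper's phrasing glosses over: it makes explicit that $G'$ can a priori be disconnected, and you correctly note that this must be ruled out for $p>2$ (where the optimal weight distribution over components is interior); your superadditivity-type argument does close that case. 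Your part (1) via Motzkin--Straus ($\omega(G')\le 3$ for a pseudoforest) matches the paper.

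The one substantive step you leave at the level of assertion is step (ii): that among connected unicyclic graphs with $k-1$ edges the $p$-spectral radius is uniquely maximized by $S_{k-1}^+$ (cycle shrunk to a triangle, all pendant edges at one cycle vertex), and the analogous statement for trees and $S_k$. To be fair, the paper also states these inequalities without proof in Cases 1 and 2 of its argument; they do follow from the edge-moving and contraction machinery of Section~\ref{sec2} (Theorems~\ref{thm:move-edges}--\ref{thm:G(k,s)>G(k+1,s-1)}), but if you were writing this up in full you would need to carry out that reduction while checking that each operation preserves the pseudoforest (i.e.\ Berge-$S_k$) constraint, exactly the bookkeeping you flag as the main obstacle.
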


The paper is organized as follows. In Section \ref{sec2}, we study some edge-shifting operations for
connected graphs/hypergraphs, and consider the effect of them to increase or decrease the $p$-spectral
radius. In Section \ref{sec3}, we give the proofs of \autoref{thm:main-result-path} and \autoref{thm:main-result-cycle}.
In Section \ref{sec4}, we give the proof of \autoref{thm:main-result-star}.

\section{Perturbations of $p$-spectral radius under edge operations}
\label{sec2}

In this section, we prove several results for comparing the $p$-spectral radius by edge-shifting operations.
Before continuing, we need the following part of Perron--Frobenius theorem for uniform hypergraphs.

\begin{theorem}[\cite{Nikiforov2014:analytic-methods}]
\label{thm:positive-vector-p>r-1}
Let $r\geq 2$ and $p>r-1$. If $H$ is a connected $r$-uniform hypergraph on $n$ vertices and
$(x_1,x_2,\ldots,x_n)^{\mathrm{T}}\in\mathbb{S}_{p,+}^{n-1}$ is an eigenvector corresponding to $\lambda^{(p)}(H)$,
then $x_i>0$ for $i=1$, $2$, $\ldots$, $n$.
\end{theorem}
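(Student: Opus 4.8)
The plan is to argue by contradiction using a local perturbation of the maximizer. Suppose $\bm{x}\in\mathbb{S}_{p,+}^{n-1}$ attains $\lambda:=\lambda^{(p)}(H)$ but some coordinate vanishes. Write $S=\{i:x_i>0\}$ for the support and $T=V(H)\setminus S$; since $\|\bm{x}\|_p=1$ we have $S\neq\emptyset$, and we are assuming $T\neq\emptyset$. Because every edge meeting $T$ contributes a zero factor to $P_H(\bm{x})$, only edges lying inside $S$ count, so the value $\lambda=P_H(\bm{x})$ is insensitive, to leading order, to small changes of the coordinates in $T$. The idea is to move a little positive mass onto a carefully chosen set of $T$-vertices so that $P_H$ increases faster than the renormalization back onto the unit sphere decreases it, contradicting maximality.

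First I would use connectivity to locate where to perturb. Since $H$ is connected and both $S,T$ are nonempty, there is at least one \emph{cut edge}, i.e.\ an edge meeting both $S$ and $T$. Among all cut edges choose one, say $e^*$, minimizing $|e\cap T|$, and set $t:=|e^*\cap T|$, so $1\le t\le r-1$. Writing $W:=e^*\cap T$, I would then define the perturbed vector $\bm{x}^\varepsilon$ by $x^\varepsilon_j=\varepsilon$ for $j\in W$, $x^\varepsilon_i=x_i$ for $i\in S$, and $x^\varepsilon_j=0$ otherwise.

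The computational heart is the expansion of $P_H(\bm{x}^\varepsilon)$ in $\varepsilon$. Only edges contained in $S\cup W$ survive, each contributing $\bigl(\prod_{i\in e\cap S}x_i\bigr)\varepsilon^{|e\cap W|}$. The crucial observation is that any surviving edge with $e\cap W\neq\emptyset$ is itself a cut edge (it meets $S$ because $|W|<r$, and meets $T$ through $W$) whose $T$-part is exactly $e\cap W$; by the minimality of $t$ this forces $|e\cap W|\ge t$. Hence $P_H(\bm{x}^\varepsilon)=\lambda+c\,\varepsilon^{t}+o(\varepsilon^{t})$ with $c>0$, the coefficient $c$ being positive because $e^*$ contributes $\bigl(\prod_{i\in e^*\cap S}x_i\bigr)\varepsilon^{t}$ and all $x_i$ with $i\in e^*\cap S$ are strictly positive. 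On the other hand $\|\bm{x}^\varepsilon\|_p^p=1+t\varepsilon^{p}$, so renormalizing and using homogeneity of degree $r$ gives $P_H\bigl(\bm{x}^\varepsilon/\|\bm{x}^\varepsilon\|_p\bigr)=\lambda+c\,\varepsilon^{t}-\tfrac{rt\lambda}{p}\varepsilon^{p}+o(\varepsilon^{t})+o(\varepsilon^p)$.

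Finally I would invoke the hypothesis $p>r-1$. Since $t\le r-1<p$, the renormalization penalty is $o(\varepsilon^{t})$, so the gain $c\,\varepsilon^{t}$ dominates and $P_H\bigl(\bm{x}^\varepsilon/\|\bm{x}^\varepsilon\|_p\bigr)>\lambda$ for all sufficiently small $\varepsilon>0$, contradicting the maximality of $\lambda$. Therefore $T=\emptyset$ and $\bm{x}$ is strictly positive. I expect the main obstacle to be the bookkeeping in the previous paragraph --- specifically proving that no surviving edge contributes a power of $\varepsilon$ below $\varepsilon^{t}$, which is exactly where connectivity (to guarantee a cut edge exists) and the extremal choice of $e^*$ (to control the lowest-order term) both enter. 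The role of the assumption $p>r-1$ is then transparent: it is precisely the threshold that lets a gain of order $\varepsilon^{t}$ with $t\le r-1$ beat the order-$\varepsilon^{p}$ cost of projecting back onto the unit sphere.
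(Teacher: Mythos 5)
The paper does not actually prove \autoref{thm:positive-vector-p>r-1}: it is quoted as a known result from \cite{Nikiforov2014:analytic-methods}, so there is no in-paper argument to compare against. Your perturbation proof is correct and is essentially the standard argument behind this Perron--Frobenius-type statement: connectivity produces a cut edge, the extremal choice of $e^*$ (minimizing $|e\cap T|$ over cut edges) guarantees that every edge of $S\cup W$ meeting $W$ contributes order at least $\varepsilon^{t}$, the edge $e^*$ itself supplies a strictly positive coefficient at order exactly $\varepsilon^{t}$, and $t\le r-1<p$ makes this gain beat the order-$\varepsilon^{p}$ renormalization cost, exhibiting the threshold $p>r-1$ exactly as you say. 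Two small polish points: since every additional term in $P_H(\bm{x}^{\varepsilon})$ is nonnegative, you can avoid the $o(\varepsilon^{t})$ bookkeeping entirely via the one-sided bound $P_H(\bm{x}^{\varepsilon})\ge \lambda + c\,\varepsilon^{t}$ with $c=r\prod_{i\in e^*\cap S}x_i>0$, and then compare directly with $\lambda\bigl((1+t\varepsilon^{p})^{r/p}-1\bigr)=O(\varepsilon^{p})$; and you should dispatch the degenerate case of a connected $r$-graph with no edges (only possible for $n=1$), where the claim is trivial.
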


The following edge-shifting operation was introduced in \cite{LiShaoQi2016:extremal-spectral-radius}.
Let $H$ be an $r$-uniform hypergraph with $u$, $v\in V(H)$ and $e_i\in E(H)$ such that $u\notin e_i$,
$v\in e_i$ for $i=1,2,\ldots,s$. Let $e_i'=(e_i\backslash\{v\})\cup\{u\}$, $i=1,2,\ldots,s$, and $H'$
be the $r$-uniform hypergraph with $V(H')=V(H)$ and $E(H')=(E(H)\backslash \{e_1,\ldots,e_s\})\cup \{e_1',\ldots,e_s'\}$.
Then we say that $H'$ is obtained from $H$ by moving edges $e_1$, $e_2$, $\ldots$, $e_s$ from $v$ to $u$.

In our proofs, we frequently use the following theorem that generalizes a result of
\cite[Theorem 15]{LiShaoQi2016:extremal-spectral-radius}. The proof is similar to that of
\cite[Theorem 15]{LiShaoQi2016:extremal-spectral-radius}.

\begin{theorem}\label{thm:move-edges}
Let $r\geq 2$ and $p\geq 1$. Suppose that $H$ is a connected $r$-uniform hypergraph, $H'$ is the hypergraph
obtained from $H$ by moving edges $e_1$, $e_2$, $\ldots$, $e_s$ from $v$ to $u$ and contains no multiple edges.
Let $\bm{x}\in\mathbb{S}_{p,+}^{v(H)-1}$ be an eigenvector corresponding to $\lambda^{(p)}(H)$. If $x_u\geq x_v$, then
$\lambda^{(p)}(H')\geq\lambda^{(p)}(H)$.
Furthermore, if $p>r-1$, then $\lambda^{(p)}(H')>\lambda^{(p)}(H)$.
\end{theorem}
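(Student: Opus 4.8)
The plan is to bound $\lambda^{(p)}(H')$ from below by testing the polynomial form of $H'$ against the eigenvector $\bm{x}$ of $H$. Since $H$ and $H'$ differ only in the $s$ moved edges, and each $e_i'=(e_i\setminus\{v\})\cup\{u\}$ is obtained from $e_i$ by swapping $v$ for $u$, the first step is the direct computation
\[
P_{H'}(\bm{x})-P_H(\bm{x})
= r\sum_{i=1}^{s}\Bigl(\prod_{j\in e_i'}x_j-\prod_{j\in e_i}x_j\Bigr)
= r(x_u-x_v)\sum_{i=1}^{s}\prod_{j\in e_i\setminus\{v\}}x_j.
\]
Writing $S:=\sum_{i=1}^{s}\prod_{j\in e_i\setminus\{v\}}x_j$, nonnegativity of $\bm{x}$ gives $S\geq 0$, and the hypothesis $x_u\geq x_v$ yields $P_{H'}(\bm{x})\geq P_H(\bm{x})=\lambda^{(p)}(H)$. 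Because $\|\bm{x}\|_p=1$, the variational definition of $\lambda^{(p)}(H')$ gives $\lambda^{(p)}(H')\geq P_{H'}(\bm{x})$, so $\lambda^{(p)}(H')\geq\lambda^{(p)}(H)$, which is the first claim.

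For the strict inequality when $p>r-1$, I would first invoke the Perron--Frobenius statement \autoref{thm:positive-vector-p>r-1}: connectedness of $H$ together with $p>r-1$ forces $\bm{x}>0$, and hence $S>0$ (assuming $s\geq 1$, so that the moving operation is nontrivial). If $x_u>x_v$, the displayed identity immediately gives $P_{H'}(\bm{x})>P_H(\bm{x})$ and therefore $\lambda^{(p)}(H')>\lambda^{(p)}(H)$. The hard part will be the boundary case $x_u=x_v$, where $P_{H'}(\bm{x})=P_H(\bm{x})$ and the test-vector bound degenerates to equality, so this single case is the genuine obstacle.

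To handle $x_u=x_v$, the plan is a contradiction argument through the eigenequations \eqref{eq:eigenequation}. Suppose $\lambda^{(p)}(H')=\lambda^{(p)}(H)$. Then $P_{H'}(\bm{x})=\lambda^{(p)}(H)=\lambda^{(p)}(H')$, so the positive vector $\bm{x}$ is a maximizer for $H'$, hence an eigenvector of $H'$, and \eqref{eq:eigenequation} for $H'$ holds at every vertex. I then compare the eigenequation at $u$ for $H$ and for $H'$. Since $u\notin e_i$, every edge of $H$ through $u$ survives in $H'$, and the only new edges through $u$ are $e_1',\ldots,e_s'$, which are distinct and not already present because $H'$ has no multiple edges; each contributes $\prod_{j\in e_i'\setminus\{u\}}x_j=\prod_{j\in e_i\setminus\{v\}}x_j$. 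Thus the left-hand side at $u$ increases by exactly $S$, giving $\lambda^{(p)}(H')\,x_u^{p-1}=\lambda^{(p)}(H)\,x_u^{p-1}+S$; since $\lambda^{(p)}(H')=\lambda^{(p)}(H)$ this forces $S=0$, contradicting $S>0$. Hence $\lambda^{(p)}(H')>\lambda^{(p)}(H)$ in this case as well, completing the proof.
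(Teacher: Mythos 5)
Your proposal is correct and follows essentially the same route as the paper: the same test-vector computation $P_{H'}(\bm{x})-P_H(\bm{x})=r(x_u-x_v)\sum_i\prod_{w\in e_i\setminus\{v\}}x_w\geq 0$ for the weak inequality, and for $p>r-1$ the same contradiction via positivity of $\bm{x}$ and the eigenequation at $u$, which forces $\sum_i\prod_{w\in e_i\setminus\{v\}}x_w=0$. Your explicit case split on $x_u>x_v$ versus $x_u=x_v$ is a harmless refinement the paper skips, since its contradiction argument covers both cases at once.
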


\begin{proof}
Assume $p\geq 1$. In view of \eqref{eq:definition-p-spectral-radius},
we have
\begin{align*}
       & ~\lambda^{(p)}(H')-\lambda^{(p)}(H) \\
  \geq & ~r\sum_{\{i_1,\ldots,i_r\}\in E(H')} x_{i_1}\cdots x_{i_r}-
        r\sum_{\{i_1,\ldots,i_r\}\in E(H)} x_{i_1}\cdots x_{i_r} \\
  \geq & ~r(x_u-x_v)\sum_{i=1}^s \prod_{w\in e_i\backslash\{v\}} x_w \\
  \geq & ~0.
\end{align*}
Hence, $\lambda^{(p)}(H')\geq\lambda^{(p)}(H)$.

Now let $p>r-1$. By \autoref{thm:positive-vector-p>r-1}, $\bm{x}$ is a positive vector.
We shall prove $\lambda^{(p)}(H')\neq\lambda^{(p)}(H)$. If not, then $\bm{x}$ is also
the eigenvector of $H'$ corresponding to $\lambda^{(p)}(H')=\lambda^{(p)}(H)$. Using the
eigenequations \eqref{eq:eigenequation} for $u$, we have
\begin{align*}
  0 & =(\lambda^{(p)}(H')-\lambda^{(p)}(H))x_u^{p-1} \\
    & =\sum_{i=1}^s \prod_{w\in e_i\backslash\{v\}} x_w \\
    & >0,
\end{align*}
a contradiction. It follows that $\lambda^{(p)}(H')>\lambda^{(p)}(H)$. The proof is completed.
\end{proof}

For an $r$-uniform hypergraph $H$, the {\em link} $L(v)$ of a vertex $v\in V(H)$ is the $(r-1)$-uniform
hypergraph consisting of all $S\subseteq V(H)$ with $S\cup\{v\}\in E(H)$.

\begin{theorem}\label{thm:delete-vetex-increase-spectral-radius}
Let $r\geq 2$, $p\geq 1$, and $H$ be a connected $r$-uniform hypergraph. Suppose that $u$, $v\in V(H)$ such
that $u$, $v$ are not contained in an edge of $H$, and $L(u)\cap L(v)=\emptyset$. Let $H'$ be the hypergraph
obtained from $H$ by deleting $u$ and adding edges $\{f\cup\{v\}: f\in L(u)\}$. Then
$\lambda^{(p)}(H')\geq\lambda^{(p)}(H)$. Furthermore, if $p>r-1$, then $\lambda^{(p)}(H')>\lambda^{(p)}(H)$.
\end{theorem}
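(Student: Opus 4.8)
The plan is to realize $H'$ as the result of moving all of $u$'s edges onto $v$ and then discarding the now-isolated vertex $u$, so that \autoref{thm:move-edges} applies. Writing each edge at $u$ as $f\cup\{u\}$ with $f\in L(u)$, the prescribed operation replaces it by $f\cup\{v\}$, which is precisely the ``move edges from $u$ to $v$'' operation followed by deletion of $u$; since an isolated vertex contributes nothing to $P_H$ and may be assigned coordinate $0$ in the optimization, deleting it leaves $\lambda^{(p)}$ unchanged. First I would check that this move creates no multiple edges: the hypotheses $L(u)\cap L(v)=\emptyset$ and the fact that $u,v$ lie in no common edge guarantee that each $f\in L(u)$ satisfies $v\notin f$ and $f\notin L(v)$, so $f\cup\{v\}$ is a genuine $r$-set not already present in $H$.

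The difficulty is that \autoref{thm:move-edges} yields an increase only when edges are moved \emph{toward} the vertex with the larger eigenvector coordinate, and a priori we do not know whether $x_u\ge x_v$ or $x_v\ge x_u$ for an eigenvector $\bm{x}$ of $\lambda^{(p)}(H)$. To get around this I would introduce the companion hypergraph $H''$ obtained symmetrically, by moving all of $v$'s edges onto $u$ and deleting $v$. The key observation is that $H'\cong H''$: in both hypergraphs the surviving vertex has link exactly $L(u)\cup L(v)$ while the other vertex is removed, and the transposition swapping $u$ and $v$ (fixing all remaining vertices) carries one to the other. This isomorphism is where the hypotheses are truly used — because every $h\in L(u)\cup L(v)$ avoids both $u$ and $v$, the transposition sends $h\cup\{u\}$ to $h\cup\{v\}$ and fixes every edge meeting neither $u$ nor $v$. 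Consequently $\lambda^{(p)}(H')=\lambda^{(p)}(H'')$.

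With this symmetry in hand the argument splits into two parallel cases. If $x_v\ge x_u$, applying \autoref{thm:move-edges} to the move from $u$ to $v$ gives $\lambda^{(p)}(H')\ge\lambda^{(p)}(H)$ directly. If instead $x_u\ge x_v$, the same theorem applied to the move from $v$ to $u$ gives $\lambda^{(p)}(H'')\ge\lambda^{(p)}(H)$, and then $\lambda^{(p)}(H')=\lambda^{(p)}(H'')\ge\lambda^{(p)}(H)$. In either case the weak inequality follows.

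For the strict inequality when $p>r-1$, I would invoke the ``furthermore'' clause of \autoref{thm:move-edges} in whichever case applies; this requires the set of moved edges to be nonempty, which holds because $H$ is connected on at least two vertices (it contains the distinct vertices $u,v$) and hence has no isolated vertex, so $|L(u)|\ge 1$ and $|L(v)|\ge 1$. By \autoref{thm:positive-vector-p>r-1} the eigenvector $\bm{x}$ is strictly positive, which is exactly what the strict part of \autoref{thm:move-edges} needs. I expect the only real care to lie in the bookkeeping that verifies the isomorphism $H'\cong H''$ and the no-multiple-edges condition; once those are in place, the spectral comparison is immediate from the already-established \autoref{thm:move-edges}.
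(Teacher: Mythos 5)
Your proof is correct, but it takes a genuinely different route from the paper's. The paper argues directly with a test vector: starting from an eigenvector $\bm{x}$ of $H$, it defines $\bm{y}$ on $V(H')$ by $y_v=(x_v^p+x_u^p)^{1/p}$ and $y_w=x_w$ otherwise, so that $\|\bm{y}\|_p=1$ and $y_v\geq\max\{x_u,x_v\}$; evaluating $P_{H'}(\bm{y})$ then dominates $P_H(\bm{x})$ term by term, with strictness for $p>r-1$ coming from $y_v>x_u$ once $\bm{x}$ is known to be positive. You instead factor the operation as ``move all edges at $u$ onto $v$, then drop the isolated vertex,'' reduce to \autoref{thm:move-edges}, and dispose of the unknown sign of $x_u-x_v$ by observing that the symmetric construction $H''$ (moving $v$'s edges onto $u$) is isomorphic to $H'$ via the transposition of $u$ and $v$ — an isomorphism that is legitimate precisely because the hypotheses force every $f\in L(u)\cup L(v)$ to avoid both $u$ and $v$, and the same hypotheses rule out multiple edges. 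Both arguments are sound and use the two hypotheses in the same essential places. What the paper's construction buys is the absence of any case split: the $\ell^p$-merged weight $y_v$ simultaneously dominates $x_u$ and $x_v$, so one test vector settles everything at once, and the argument is self-contained. What your route buys is modularity — no new vector needs to be built or normalized, and the spectral comparison is delegated entirely to the already-proved \autoref{thm:move-edges} — at the cost of the bookkeeping for the isomorphism $H'\cong H''$ and the isolated-vertex removal, both of which you handle correctly.
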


\begin{proof}
Let $p\geq 1$ and $\bm{x}\in\mathbb{S}^{v(H)-1}_{p,+}$ be an eigenvector corresponding to $\lambda^{(p)}(H)$.
We now define a vector $\bm{y}$ for $H'$ as follows:
\[
   y_w=
   \begin{cases}
     (x_v^p+x_u^p)^{1/p}, & \text{if}\ w=v,\\
     x_w, & \text{otherwise}.
   \end{cases}
\]
It is clear that $\bm{y}\in\mathbb{S}^{v(H')-1}_{p,++}$. Denote $R:=\{f\cup\{v\}: f\in L(u)\}$.
It follows from \eqref{eq:definition-p-spectral-radius} that
\begin{align*}
  \lambda^{(p)}(H') & \geq r\sum_{\{i_1,i_2,\ldots,i_r\}\in E(H')} y_{i_1}y_{i_2}\cdots y_{i_r} \\
                    & =r\sum_{e\in R} \prod_{w\in e} y_w+r\sum_{e\in E(H')\backslash R} \prod_{w\in e} y_w \\
                    & =r(y_v-x_u)\sum_{f\in L(u)} \prod_{w\in f}x_w+r\sum_{e\in E(H)} \prod_{w\in e}x_w \\
                    & \geq\lambda^{(p)}(H).
\end{align*}

If $p>r-1$, by \autoref{thm:positive-vector-p>r-1}, $\bm{x}\in\mathbb{S}^{v(H)-1}_{p,++}$ is a
positive eigenvector corresponding to $\lambda^{(p)}(H)$. Therefore, the above inequality is strict. This
completes the proof of the theorem.
\end{proof}

Suppose that $G$ is a connected graph and $u$ is a vertex in $G$. Let $G(u;k,s)$ be the graph obtained
from $G$ by attaching two pendent paths of lengths $k$ and $s$ at $u$, respectively.

\begin{figure}[htbp]
\begin{minipage}{0.45\textwidth}
  \centering
  \begin{tikzpicture}[scale=0.95]
     \draw[xshift=-1] (-0.77,0) node {$G$} circle (0.8);
     \filldraw (0,0) node[left] {$u$} circle (0.06);
     \foreach \i in {1,2,3,4}
     {
      \coordinate (u\i) at (-25:0.8*\i);
      \coordinate (v\i) at (25:0.8*\i);
      \filldraw (u\i) circle (0.06);
      \filldraw (v\i) circle (0.06);
     }
     \draw (0,0)--(u1)--(u2);
     \draw (0,0)--(v1)--(v2);
     \draw (u3)--(u4);
     \draw (v3)--(v4);
     \node[below=3pt] at (u1) {$u_1$};
     \node[below=3pt] at (u2) {$u_2$};
     \node[below=3pt] at (u4) {$u_{k+1}$};
     \node[above=3pt] at (v1) {$v_1$};
     \node[above=3pt] at (v2) {$v_2$};
     \node[above=3pt] at (v4) {$v_{s-1}$};
     \node[rotate=25] at (25:2.05) {$\cdots$};
     \node[rotate=-25] at (-25:2.05) {$\cdots$};
  \end{tikzpicture}
  \caption{\label{fig:G(k+1,s-1)} Graph $G(u;k+1,s-1)$}
\end{minipage}
\begin{minipage}{0.5\textwidth}
   \centering
   \begin{tikzpicture}[scale=0.95]
    \draw (0,-0.8) node {$G$} circle (0.8);
     \filldraw (0,0) node[below] {$u$} circle (0.06);
     \foreach \i in {1,2,...,5}
   {
      \coordinate (u\i) at (145:0.8*\i);
      \filldraw (u\i) circle (0.06);
   }
     \foreach \i in {1,2,...,5}
   {
     \coordinate (v\i) at (35:0.8*\i);
     \filldraw (v\i) circle (0.06);
   }
     \node[anchor=north east] at (u1) {$v_1$};
     \node[anchor=north east] at (u2) {$v_{s-i-1}$};
     \node[anchor=north east] at (u3) {$v_{s-i}$};
     \node[anchor=north east] at (u5) {$v_{s-1}$};
     \node[anchor=north west] at (v1) {$u_1$};
     \node[anchor=north west] at (v2) {$u_{k-i}$};
     \node[anchor=north west] at (v3) {$u_{k-(i-1)}$};
     \node[anchor=north west] at (v5) {$u_{k+1}$};
     \node[rotate=145] at ($(u1)!0.5!(u2)$) {$\cdots$};
     \node[rotate=145] at ($(u4)!0.5!(u5)$) {$\cdots$};
     \node[rotate=35] at ($(v2)!0.5!(v1)$) {$\cdots$};
     \node[rotate=35] at ($(v5)!0.5!(v4)$) {$\cdots$};
     \draw (v1)--(0,0)--(u1);
     \draw (u3)--(u4);
     \draw (v3)--(v4);
     \draw[dashed] (v3) edge[bend right=40] (u2);
     \draw[dashed] (v2) edge[bend right=40] (u3);
  \end{tikzpicture}
  \caption{\label{fig:G2} Graph $G_2$}
\end{minipage}
\end{figure}

\begin{theorem}\label{thm:G(k,s)>G(k+1,s-1)}
Let $p>1$ and $G$ be a connected graph. If $k\geq s\geq 1$, then
$\lambda^{(p)}(G(u;k,s))>\lambda^{(p)}(G(u;k+1,s-1))$.
\end{theorem}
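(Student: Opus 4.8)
The plan is to exhibit $H_1:=G(u;k,s)$ as the result of a single edge-move applied to $H_2:=G(u;k+1,s-1)$ and then quote \autoref{thm:move-edges}. Since a graph is a $2$-uniform hypergraph we are in the case $r=2$, so the hypothesis $p>1$ coincides with $p>r-1$; by \autoref{thm:positive-vector-p>r-1} the Perron eigenvector of $H_2$ is strictly positive, and \autoref{thm:move-edges} will upgrade a non-strict eigenvector comparison into a strict inequality for $\lambda^{(p)}$. Write the long pendant path of $H_2$ as $u=u_0,u_1,\dots,u_{k+1}$ and the short one as $u=v_0,v_1,\dots,v_{s-1}$. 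Deleting $\{u_k,u_{k+1}\}$ and inserting $\{v_{s-1},u_{k+1}\}$ (so that $u_{k+1}$ becomes the new tip $v_s$ of a short path of length $s$) produces a copy of $H_1$; in the notation of \autoref{thm:move-edges} this is exactly moving the single edge incident to $u_{k+1}$ from $v:=u_k$ to $u:=v_{s-1}$, and the result has no multiple edges and stays connected. Hence everything reduces to showing that the Perron eigenvector $\bm{x}$ of $H_2$ satisfies $x_{v_{s-1}}\ge x_{u_k}$; granting this, \autoref{thm:move-edges} together with $p>r-1$ yields $\lambda^{(p)}(H_1)>\lambda^{(p)}(H_2)$.

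The crux is therefore the entry comparison $x_{v_{s-1}}\ge x_{u_k}$. Set $\lambda:=\lambda^{(p)}(H_2)$. First I would record that $\bm x$ strictly decreases along every pendant path toward its tip: reading the eigenequation \eqref{eq:eigenequation} at the leaf ($x_{u_k}=\lambda x_{u_{k+1}}^{p-1}$, resp.\ $x_{v_{s-2}}=\lambda x_{v_{s-1}}^{p-1}$) and at each interior degree-$2$ vertex ($x_{j-1}+x_{j+1}=\lambda x_j^{p-1}$), one gets $x_u>x_{u_1}>\cdots>x_{u_{k+1}}$ and $x_u>x_{v_1}>\cdots>x_{v_{s-1}}$. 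The two pendant paths share the common root value $x_u$, while $u_k$ lies at depth $k$ and $v_{s-1}$ at the strictly smaller depth $s-1\le k-1$; intuitively the deeper entry is smaller. To make this rigorous I would write each tip value as a telescoping product of consecutive ratios, $x_{u_k}=x_u\prod_{j=1}^{k}\tfrac{x_{u_j}}{x_{u_{j-1}}}$ and $x_{v_{s-1}}=x_u\prod_{j=1}^{s-1}\tfrac{x_{v_j}}{x_{v_{j-1}}}$, and control these ratios through the inward recursion $x_{j-1}=\lambda x_j^{p-1}-x_{j+1}$ seeded by the leaf identity. The short product has only $s-1\le k-1$ factors in $(0,1)$, whereas the long product has at least one extra such factor, and this surplus decay is what forces $x_{v_{s-1}}>x_{u_k}$.

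The main obstacle is precisely this ratio comparison. It is subtle because the two paths do not decay at matching rates: near its tip the short path actually contracts slightly faster than the long path (a leaf versus an interior vertex), so one cannot argue factor-by-factor. The substance of the argument is to show that the long path's extra decay steps more than compensate; for $p=2$ the recursion is linear and this is the classical monotonicity of continued fractions, but for general $p>1$ the recursion is genuinely nonlinear and the monotonicity of the inward transfer map (in the number of steps, at fixed $\lambda$) must be established directly. The degenerate base case $s=1$ is immediate, since then $v_{s-1}=u$ and $x_{v_{s-1}}=x_u>x_{u_k}$; this also suggests an induction on $s$, peeling one edge from each pendant path, as an alternative to the transfer-map estimate, and the crossed-tail auxiliary graph $G_2$ of Figure~\ref{fig:G2} offers a further route by comparing the two pendant paths symmetrically. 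Once $x_{v_{s-1}}\ge x_{u_k}$ is in hand, the theorem follows at once.
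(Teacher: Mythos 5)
Your reduction is sound as far as it goes: deleting $u_ku_{k+1}$ and adding $v_{s-1}u_{k+1}$ does turn $G(u;k+1,s-1)$ into a copy of $G(u;k,s)$, and since $r=2$ and $p>1=r-1$, \autoref{thm:move-edges} would give the strict inequality once you know $x_{v_{s-1}}\geq x_{u_k}$ for the positive eigenvector $\bm{x}$ of $G(u;k+1,s-1)$. But that comparison is exactly where your argument stops. You name it yourself as ``the main obstacle,'' sketch a telescoping-ratio strategy, concede that a factor-by-factor comparison fails (the short path contracts faster near its tip), and then list three possible ways out (monotonicity of a nonlinear transfer map, induction on $s$, the crossed-tail swap) without carrying any of them through. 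As written, the one inequality the whole argument rests on is not proved. Even your preliminary claim of strict decrease along pendant paths is not free: from the leaf equation $x_{u_k}=\lambda x_{u_{k+1}}^{p-1}$ one only gets $x_{u_k}>x_{u_{k+1}}$ when $\lambda>x_{u_{k+1}}^{2-p}$, which needs justification; and for $G=K_2$ with $k=s=1$ the graph $G(u;2,0)$ is $P_4$, whose Perron vector satisfies $x_u=x_{u_1}$, so both the strict decrease and the strict tip comparison $x_{v_0}>x_{u_1}$ fail (only the weak form survives). So the unconditional eigenvector monotonicity you need is delicate, and nothing in the proposal establishes it.

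The paper sidesteps this entirely by arguing by contradiction: assume $\lambda^{(p)}(G(u;k,s))\leq\lambda^{(p)}(G(u;k+1,s-1))$. Each candidate edge-swap that produces a copy of $G(u;k,s)$ then either contradicts this assumption outright (via \autoref{thm:move-edges} or the polynomial form together with the eigenequations) or forces an inequality between two eigenvector entries. Chaining these, one proves by induction that $x_{u_{k-i}}>x_{v_{s-i-1}}$ for $i=0,1,\ldots,s-1$ --- note that the direction is opposite to the one you want and that the conclusion is conditional on the assumed inequality --- ending with $x_{u_{k-(s-1)}}>x_u$. Moving all edges of $G$ incident to $u$ over to $u_{k-(s-1)}$ then yields yet another copy of $G(u;k,s)$ with strictly larger $p$-spectral radius, the final contradiction. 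If you want to keep your direct route, you must actually prove the transfer-map monotonicity for all $p>1$; the paper's conditional, contradiction-based bookkeeping is precisely a device for avoiding that analytic difficulty.
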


\begin{proof}
Suppose for a contradiction that $\lambda^{(p)}(G(u;k,s))\leq\lambda^{(p)}(G(u;k+1,s-1))$.
By \autoref{thm:positive-vector-p>r-1}, we let $\bm{x}$ be a positive eigenvector of
$G(u;k+1,s-1)$ with $||\bm{x}||_p=1$ corresponding to $\lambda^{(p)}(G(u;k+1,s-1))$. Let
$uu_1\cdots u_ku_{k+1}$ and $uv_1\cdots v_{s-2}v_{s-1}$ be two pendent paths at $u$ of
lengths $(k+1)$ and $(s-1)$ in $G(u;k+1,s-1)$, respectively (see Fig.\,\ref{fig:G(k+1,s-1)}). For short, denote $v_0=u$.
We will prove by induction that
\[
x_{u_{k-i}}>x_{v_{s-i-1}},~i=0,1,\ldots,s-1.
\]

$\bullet$~~For $i=0$. If $x_{u_k}\leq x_{v_{s-1}}$, then for the graph $G_1$ obtained
by replacing edge $u_ku_{k+1}$ with $v_{s-1}u_{k+1}$, we have $G_1\cong G(u;k,s)$. By
\autoref{thm:move-edges}, $\lambda^{(p)}(G(u;k,s))=\lambda^{(p)}(G_1)>\lambda^{(p)}(G(u;k+1,s-1))$.
Hence, $x_{u_k}>x_{v_{s-1}}$.

$\bullet$~~Suppose that $x_{u_{k-(i-1)}}> x_{v_{s-(i-1)-1}}$. We shall show that
$x_{u_{k-i}}> x_{v_{s-i-1}}$. Let $G_2$ be the graph obtained from $G(u;k+1,s-1)$
by deleting edges $\{u_{k-i}u_{k-(i-1)},v_{s-i-1}v_{s-i}\}$ and adding edges
$\{u_{k-(i-1)}v_{s-i-1},u_{k-i}v_{s-i}\}$ (see Fig.\,\ref{fig:G2}). Clearly,
$G_2\cong G(u;k,s)$. In light of \eqref{eq:definition-p-spectral-radius}, we have
\begin{align*}
  0 & \geq\lambda^{(p)}(G(u;k,s))-\lambda^{(p)}(G(u;k+1,s-1)) \\
    & =\lambda^{(p)}(G_2)-\lambda^{(p)}(G(u;k+1,s-1))\\
    & \geq 2\sum_{vw\in E(G(u;k,s))} x_vx_w-2\sum_{vw\in E(G(u;k+1,s-1))} x_vx_w\\
    & =2(x_{u_{k-(i-1)}}-x_{v_{s-i}})(x_{v_{s-i-1}}-x_{u_{k-i}}).
\end{align*}
By the inductive hypothesis, we obtain $x_{u_{k-i}}\geq x_{v_{s-i-1}}$. If $x_{u_{k-i}}=x_{v_{s-i-1}}$,
then $\lambda^{(p)}(G(u;k,s))=\lambda^{(p)}(G(u;k+1,s-1))$. Using the eigenequations \eqref{eq:eigenequation}
for $u_{k-i}$, we have
\begin{align*}
0 & =(\lambda^{(p)}(G(u;k,s))-\lambda^{(p)}(G(u;k+1,s-1)))x_{u_{k-i}}^{p-1} \\
  & =x_{v_{s-i}}-x_{u_{k-(i-1)}} \\
  & <0,
\end{align*}
a contradiction. Therefore, $x_{u_{k-i}}>x_{v_{s-i-1}}$.

$\bullet$~~Therefore we obtain $x_{u_{k-i}}>x_{v_{s-i-1}}$ for $i=0,1,\ldots,s-1$.
In particular, $x_{u_{k-(s-1)}}>x_{v_0}=x_u$.

Finally, we let $G_3$ denote the graph obtained from $G(u;k+1,s-1)$ by moving edges
$\{uv: v\in N_G(u)\}$ from $u$ to $u_{k-(s-1)}$, i.e.,
\[
G_3:=G(u;k+1,s-1)-\{uv: v\in N_G(u)\}+\{u_{k-(s-1)}v: v\in N_G(u)\}.
\]
Clearly, $G_3\cong G(u;k,s)$. By \autoref{thm:move-edges},
$\lambda^{(p)}(G(u;k,s))=\lambda^{(p)}(G_3)>\lambda^{(p)}(G(u;k+1,s-1))$,
a contradiction to our assumption. This completes the proof of the theorem.
\end{proof}

%
%
%
%

\section{Proofs of \autoref{thm:main-result-path} and \autoref{thm:main-result-cycle}}
\label{sec3}

In this section, we prove \autoref{thm:main-result-path} and \autoref{thm:main-result-cycle}.
Let $\mathcal{M}^r(G)$ be the set of $r$-uniform hypergraphs with maximum $p$-spectral radius
among Berge-$G$ hypergraphs. We require the following lemma.

\begin{lemma}\label{lem:v(H)-leq-v(G)+r-2}
Let $G$ be a connected graph. If $p\geq 1$, then there exists an $r$-graph $H\in\mathcal{M}^r(G)$
such that $v(H)\leq v(G)+r-2$. Furthermore, if $p>r-1$, then $v(H)\leq v(G)+r-2$ for each $H\in\mathcal{M}^r(G)$.
\end{lemma}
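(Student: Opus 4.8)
The plan is to induct on the number of \emph{extra} vertices, using the two edge operations of Section~\ref{sec2} as the engine and observing that both can be carried out inside the class $\mathcal{B}_r(G)$. Throughout I adopt the convention $V(H)=\bigcup_{e\in E(H)}e$; since $G$ is connected (with at least one edge) this is harmless, and it forces every Berge-$G$ hypergraph to be connected, because the core pairs $e\subseteq\phi(e)$ of the hyperedges realize a connected skeleton (a copy of $G$) and every vertex lies on some hyperedge. Writing $\phi\colon E(G)\to E(H)$ for the witnessing bijection, I call $e$ the \emph{core pair} of $\phi(e)$, note $V(G)\subseteq V(H)$, and set $W:=V(H)\setminus V(G)$, the extra vertices. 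The target bound $v(H)\le v(G)+r-2$ is exactly $|W|\le r-2$, and I will reduce $|W|$ one step at a time.

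The first thing I would record is that \autoref{thm:move-edges} and \autoref{thm:delete-vetex-increase-spectral-radius} keep us inside $\mathcal{B}_r(G)$ as long as the vertex losing edges (respectively, the deleted vertex) is extra. Indeed, an extra vertex lies in no core pair, so deleting it from, or replacing it inside, any hyperedge $\phi(e)$ leaves the core pair $e$ a subset of the modified edge; the no-common-edge and disjoint-link hypotheses guarantee that the induced map on edge sets is a bijection producing no repeated edge, so it transports $\phi$ to a new witnessing bijection. Thus each application of these theorems to an extra vertex yields a Berge-$G$ $r$-graph (with the same or one fewer vertex) whose $p$-spectral radius does not decrease, and strictly increases when $p>r-1$. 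The problem is therefore reduced to a purely combinatorial claim: \emph{if $|W|\ge r-1$, some sequence of such operations strictly decreases $|W|$.}

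For $r=3$ this claim is immediate, and it is the only case needed for the main theorems. A hyperedge has a single non-core slot, so no edge contains two extra vertices (ruling out common edges), and since $\phi$ is a bijection between the distinct $2$-sets $E(G)$ and $E(H)$ no two hyperedges share a core pair, which forces $L(u)\cap L(v)=\emptyset$ for distinct extra $u,v$; hence \emph{any} two extra vertices satisfy the hypotheses of \autoref{thm:delete-vetex-increase-spectral-radius} and a single merge reduces $|W|$. For general $r$ a single merge need not be available: already for $r=4$ and $G=C_3$ one can have three extra vertices that are pairwise in a common edge (and each sharing an edge with every core vertex), so no extra vertex can be merged into anything. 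The remedy I would use is a \emph{disentangle-then-merge} reduction: taking an eigenvector $\bm{x}$, move edges from a minimum-weight extra vertex $w$ toward a larger-weight vertex via \autoref{thm:move-edges} until $w$ is freed from enough shared edges that it becomes mergeable by \autoref{thm:delete-vetex-increase-spectral-radius}. I expect the principal obstacle to be exactly the bookkeeping that guarantees this terminates with a net drop in $|W|$: one must exhibit a monotone quantity (for instance the number of pairs of extra vertices sharing an edge or a link) that strictly decreases under the allowed moves while respecting the weight constraint $x_u\ge x_v$ required by \autoref{thm:move-edges}, and verify that every intermediate hypergraph stays connected and simple.

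Granting the reduction, the two assertions follow quickly. For $p\ge1$, every Berge-$G$ hypergraph with $|W|\ge r-1$ can be transformed, without decreasing $\lambda^{(p)}$, into a Berge-$G$ hypergraph with smaller $|W|$; iterating, $\sup_{H\in\mathcal{B}_r(G)}\lambda^{(p)}(H)$ equals the maximum of $\lambda^{(p)}$ over the \emph{finite} set of $r$-graphs on at most $v(G)+r-2$ vertices, so that maximum is attained by some $H\in\mathcal{M}^r(G)$ meeting the bound. For $p>r-1$, if some $H\in\mathcal{M}^r(G)$ had $v(H)>v(G)+r-2$, then $H$ is connected with a positive eigenvector by \autoref{thm:positive-vector-p>r-1}, each operation in the reduction is strict, and the resulting Berge-$G$ hypergraph would have $\lambda^{(p)}>\lambda^{(p)}(H)$, contradicting maximality; hence $v(H)\le v(G)+r-2$ for every $H\in\mathcal{M}^r(G)$.
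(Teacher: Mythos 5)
Your reduction is genuinely different from the paper's and, as you written it, it is only complete for $r=3$; the lemma is stated for all $r$, and the general case is a real gap, not just bookkeeping. Your engine is an iterated sequence of local surgeries (Theorems~\ref{thm:move-edges} and \ref{thm:delete-vetex-increase-spectral-radius}) that must each individually preserve the Berge-$G$ property, avoid multiple edges, respect the weight constraint $x_u\geq x_v$, and make measurable progress toward $|W|\leq r-2$. You correctly identify (with a valid $r=4$, $G=C_3$ example) that a single merge can be blocked, and you then defer the entire termination argument: you do not exhibit the monotone quantity, and the difficulty is not cosmetic. Moving edges off a minimum-weight extra vertex $w$ requires a target of \emph{larger} weight that does not create a repeated edge, and such moves can create new shared edges or shared links between other pairs of extra vertices, so it is not clear the ``disentangle'' phase converges. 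Since everything downstream of a failed move is conditional, the statement for general $r$ is unproven in your write-up. (Your $r=3$ argument is sound: with one non-core slot per edge, no edge contains two extra vertices, and the bijectivity of $\phi$ forces distinct extra vertices to have disjoint links, so one application of Theorem~\ref{thm:delete-vetex-increase-spectral-radius} always reduces $|W|$; the attainment and strictness arguments at the end are also fine.)

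The paper avoids the iteration entirely, and this is the idea you are missing: it performs a \emph{single global} collapse. Given $\overline{H}$ with eigenvector $\bm{x}$, it keeps the $r-2$ heaviest extra vertices $v_1,\dots,v_{r-2}$, replaces every edge $e$ meeting $V(\overline{H})\setminus V(G)$ by $(e\cap V(G))\cup\{v_1,\dots,v_{r-|e\cap V(G)|}\}$ in one step, and then compares $P_H(\bm{y})$ with $P_{\overline{H}}(\bm{x})$ for an explicitly reweighted witness vector $\bm{y}$ that redistributes the $\ell^p$-mass of the deleted vertices onto $v_1,\dots,v_{r-2}$. Because the comparison is a direct evaluation of the polynomial form at a feasible vector rather than a chain of applications of Theorem~\ref{thm:move-edges}, no intermediate hypergraph needs to be legal and no termination argument is needed; monotonicity of each entry ($y_{v_i}\geq x_{v_i}$ and $y_{v_i}\geq x_u$ for every deleted $u$, by the choice of the heaviest extra vertices) gives $\lambda^{(p)}(H)\geq\lambda^{(p)}(\overline{H})$ immediately, and positivity of $\bm{x}$ for $p>r-1$ gives strictness. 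If you want to salvage your route, either restrict the lemma to $r=3$ (which is all the paper uses) or replace the disentangle-then-merge loop with a one-shot construction of this kind.
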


\begin{proof}
Assume $p\geq 1$ and $\overline{H}\in\mathcal{M}^r(G)$. If $v(\overline{H})\leq v(G)+r-2$,
there is nothing to do. If $v(\overline{H})>v(G)+r-2$, we will construct a hypergraph $H$
from $\overline{H}$ such that $v(H)=v(G)+r-2$ and $\lambda^{(p)}(H)\geq\lambda^{(p)}(\overline{H})$.

Let $\bm{x}\in\mathbb{S}_{p,+}^{v(H)-1}$ be an eigenvector of $\overline{H}$ corresponding
to $\lambda^{(p)}(\overline{H})$. Choose $(r-2)$ vertices $v_1$, $v_2$,\,$\ldots$\,, $v_{r-2}$
from $V(\overline{H})\backslash V(G)$ such that $x_{v_1}$, $x_{v_2}$,\,$\ldots$\,,
$x_{v_{r-2}}$ are the first $(r-2)$ largest entries in $\bm{x}|_{V(\overline{H})\backslash V(G)}$.
Without loss of generality, we assume that $x_{v_1}\geq x_{v_2}\geq\cdots\geq x_{v_{r-2}}$.

Let $S:=\{e\in E(\overline{H}): e\cap (V(\overline{H})\backslash V(G))\neq\emptyset\}$. We now
define an $r$-uniform hypergraph $H$ by
\begin{equation}\label{eq:H-from-overline-H}
\begin{split}
V(H) & =V(G)\cup \{v_1,v_2,\ldots,v_{r-2}\},\\
E(H) & =(E(\overline{H})\backslash S)\cup \{(e\cap V(G))\cup\{v_1,\ldots,v_{r-|e\cap V(G)|}\}: e\in S\}.
\end{split}
\end{equation}
We also define a vector $\bm{y}\in\mathbb{S}_{p,++}^{v(H)-1}$ for $H$ as follows:
\[
y_v=\begin{cases}
      x_v, & \text{if}\ v\in V(G), \\
      \left(x_v^p+\frac{\sum_{u\in V(\overline{H})\backslash V(H)} x_u^p}{r-2}\right)^{1/p}, & \text{otherwise}.
    \end{cases}
\]
It is clear that $H$ is a Berge-$G$ hypergraph and
\[
  \lambda^{(p)}(H)-\lambda^{(p)}(\overline{H})
  \geq r\sum_{e\in E(H)} \prod_{u\in e} y_u-r\sum_{e\in E(\overline{H})} \prod_{u\in e} x_u\geq 0,
\]
which yields that $\lambda^{(p)}(H)\geq\lambda^{(p)}(\overline{H})$.

In what follows we let $p>r-1$ and prove that $v(H)\leq v(G)+r-2$ for each $H\in\mathcal{M}^r(G)$.
Suppose for a contradiction that there is an $r$-uniform hypergraph $\overline{H}\in\mathcal{M}^r(G)$ such
that $v(H)>v(G)+r-2$. Then the Berge-$G$ hypergraph $H$ given by \eqref{eq:H-from-overline-H} has
larger $p$-spectral radius than $\overline{H}$ due to $\bm{x}\in\mathbb{S}_{p,++}^{v(H)-1}$. This
is a contradiction to the fact that $\overline{H}\in\mathcal{M}^r(G)$.
\end{proof}

In \cite{Nikiforov2014:analytic-methods}, Nikiforov proved the following relation of the
$p$-spectral radius between $H$ and $H\ast K_1$. A generalized result for large $p$ can
be found in \cite{LiuLu2018:alpha-normal-method}.

\begin{lemma}[\cite{Nikiforov2014:analytic-methods}]
\label{lem:H-vee-K1}
Let $p\geq 1$ and $H$ be an $r$-uniform hypergraph. Then
\[
\lambda^{(p)}(H\ast K_1)=\frac{(r+1)^{1-(r+1)/p}}{r^{1-r/p}} \lambda^{(p)}(H).
\]
Furthermore, if $\bm{x}\in\mathbb{S}_{p,+}^{v(H)}$ is an eigenvector corresponding to $\lambda^{(p)}(H\ast K_1)$,
then $(1+1/r)^{1/p}\bm{x}|_{V(H)}$ is an eigenvector corresponding to $\lambda^{(p)}(H)$.
\end{lemma}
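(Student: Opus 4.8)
The plan is to exploit the special structure of the suspension $H\ast K_1$. Write $w$ for the unique apex vertex, which lies in every edge; then each edge of $H\ast K_1$ has the form $e\cup\{w\}$ with $e\in E(H)$, so every monomial of the polynomial form of $H\ast K_1$ is a monomial of $P_H$ multiplied by the extra factor $y_w$. Consequently, for any $\bm{y}\in\mathbb{R}^{v(H)+1}$ the polynomial form factors as
\[
P_{H\ast K_1}(\bm{y})=\frac{r+1}{r}\,y_w\,P_H(\bm{y}|_{V(H)}).
\]
This isolates the role of the apex coordinate and converts the $(v(H)+1)$-variable optimization into a one-parameter family of $v(H)$-variable optimizations.

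Next I would reduce to a single-variable problem. Set $t=y_w\geq 0$ and $\bm{z}=\bm{y}|_{V(H)}$, so that the constraint $\|\bm{y}\|_p=1$ becomes $\|\bm{z}\|_p=(1-t^p)^{1/p}$. Since $P_H$ is homogeneous of degree $r$, maximizing $P_H$ over the sphere of radius $s$ gives $s^r\lambda^{(p)}(H)$; taking $s=(1-t^p)^{1/p}$ yields
\[
\lambda^{(p)}(H\ast K_1)=\max_{0\leq t\leq 1}\frac{r+1}{r}\,t\,(1-t^p)^{r/p}\,\lambda^{(p)}(H).
\]
The remaining work is the calculus of $g(t)=t(1-t^p)^{r/p}$ on $[0,1]$. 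Differentiating and factoring out $(1-t^p)^{r/p-1}$ leaves the bracket $1-(r+1)t^p$, so the unique interior critical point is $t^\ast=(r+1)^{-1/p}$; since $g(0)=g(1)=0$ and $g>0$ in between, this is the maximizer. Substituting $t^\ast$ (so that $1-(t^\ast)^p=r/(r+1)$) and collecting the powers of $r$ and $r+1$ produces exactly the constant $\frac{(r+1)^{1-(r+1)/p}}{r^{1-r/p}}$, establishing the identity.

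For the eigenvector statement I would run the optimality analysis in reverse. If $\bm{x}\in\mathbb{S}_{p,+}^{v(H)}$ attains $\lambda^{(p)}(H\ast K_1)$, then by the reduction above its apex coordinate must equal $t^\ast=(r+1)^{-1/p}$ and its restriction $\bm{z}=\bm{x}|_{V(H)}$ must maximize $P_H$ on the sphere of radius $(1-(t^\ast)^p)^{1/p}=(r/(r+1))^{1/p}$. Rescaling $\bm{z}$ to unit $p$-norm multiplies it by $((r+1)/r)^{1/p}=(1+1/r)^{1/p}$, and because scaling preserves the ratio $P_H(\bm{z})/\|\bm{z}\|_p^{r}$, the rescaled vector is again a maximizer for $H$, i.e. an eigenvector corresponding to $\lambda^{(p)}(H)$.

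The computation itself is routine; the only points demanding a little care are verifying that the apex coordinate is strictly positive at the optimum, so that the factorization is non-degenerate and the interior critical point genuinely dominates the boundary $t\in\{0,1\}$, and checking that the homogeneity and rescaling arguments remain valid at the boundary exponent $p=1$. Both follow from the facts that $\lambda^{(p)}(H)>0$ whenever $H$ has at least one edge and that $P_H/\|\cdot\|_p^{r}$ is scale-invariant for every $p\geq 1$, so I do not expect any essential obstacle beyond bookkeeping of the exponents.
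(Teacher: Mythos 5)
Your argument is correct. Note that the paper itself gives no proof of this lemma --- it is quoted from Nikiforov's \emph{Analytic methods for uniform hypergraphs} --- so there is no in-paper argument to compare against; your derivation (factor the polynomial form as $P_{H\ast K_1}(\bm{y})=\tfrac{r+1}{r}\,y_w\,P_H(\bm{y}|_{V(H)})$, reduce by homogeneity to maximizing $t(1-t^p)^{r/p}$ on $[0,1]$, and read the eigenvector claim off the equality conditions) is exactly the standard one and all the exponent bookkeeping checks out, including the constant $\tfrac{r+1}{r}\cdot r^{r/p}(r+1)^{-(r+1)/p}=\tfrac{(r+1)^{1-(r+1)/p}}{r^{1-r/p}}$. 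The only caveat, which you essentially flag yourself, is that the eigenvector statement needs $E(H)\neq\emptyset$ so that $\lambda^{(p)}(H\ast K_1)>0$ forces $x_w=(r+1)^{-1/p}$; in the edgeless case the restriction need not have the right norm, but that degenerate case is clearly outside the intended scope.
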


For a graph, the {\em diameter} of $G$, denoted by diam$(G)$, is the maximum distance between
any pair of vertices of $G$.  If $G$ has at least one cycle, the minimum length of a cycle in $G$ is
the {\em girth} $g(G)$ of $G$. The set of neighbours of a vertex $v$ in a graph $G$ is denoted by $N_G(v)$.

\begin{theorem}\label{thm:k-vertices-general-G}
Suppose that $p\geq 1$ and $G$ is a connected graph on $k$ vertices. Then
there is a $3$-graph $H\in\mathcal{M}^3(G)$ such that $v(H)=k$ if $G$ is one of the following
\begin{enumerate}
  \item[$(1)$] $G$ is a tree with diameter diam$(G)\geq 5$;
  \item[$(2)$] The girth $g(G)\geq 6$.
\end{enumerate}
Furthermore, if $p>2$, then $v(H)=k$ for each $H\in\mathcal{M}^3(G)$ when $G$
satisfies one of the above items.
\end{theorem}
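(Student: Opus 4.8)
The plan is to combine \autoref{lem:v(H)-leq-v(G)+r-2} with the link-merging operation of \autoref{thm:delete-vetex-increase-spectral-radius}. By \autoref{lem:v(H)-leq-v(G)+r-2} (with $r=3$), for $p\geq 1$ there is some $H\in\mathcal{M}^3(G)$ with $v(H)\leq k+1$, and for $p>2$ every $H\in\mathcal{M}^3(G)$ satisfies $v(H)\leq k+1$; since $G$ is connected on $k$ vertices, $V(G)\subseteq V(H)$ and hence $v(H)\geq k$ always. It therefore suffices to take a Berge-$G$ $3$-graph $H$ with $v(H)=k+1$ and produce a Berge-$G$ $3$-graph $H'$ on $k$ vertices with $\lambda^{(p)}(H')\geq\lambda^{(p)}(H)$, the inequality being strict when $p>2$. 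Write $V(H)=V(G)\cup\{w\}$ for the unique extra vertex. Since $w\notin V(G)$ it is never an endpoint of a graph edge, so every hyperedge through $w$ has the form $e\cup\{w\}$ with $e\in E(G)$; let $F\subseteq E(G)$ be the link $L(w)$ and $U=\bigcup_{e\in F}e$. As a Berge-hypergraph has no isolated vertices, $F\neq\emptyset$.

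The crux of the reduction is to locate a vertex $v^*\in V(G)$ with (i) $v^*\notin U$ and (ii) $e\cup\{v^*\}\notin E(H)$ for every $e\in F$. Indeed, (i) says that $w$ and $v^*$ share no edge, while (ii) says $L(w)\cap L(v^*)=\emptyset$ and also guarantees that the triples $\{e\cup\{v^*\}:e\in F\}$ are new and pairwise distinct. Then \autoref{thm:delete-vetex-increase-spectral-radius} applies to the pair $w,v^*$: deleting $w$ and adding $\{e\cup\{v^*\}:e\in F\}$ yields $H'$ with $\lambda^{(p)}(H')\geq\lambda^{(p)}(H)$, strict for $p>2$. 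The map agreeing with $\phi$ off $F$ and sending $e\mapsto e\cup\{v^*\}$ on $F$ shows that $H'$ is again Berge-$G$, and $v(H')=k$. For $p\geq 1$ this gives an optimal $k$-vertex hypergraph; for $p>2$ the strict gain contradicts optimality of $H$, proving that every member of $\mathcal{M}^3(G)$ has exactly $k$ vertices.

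It remains to guarantee such a $v^*$, and this is precisely where $\mathrm{diam}(G)\geq 5$ or $g(G)\geq 6$ enters. When $F$ is large---at the extreme $F=E(G)$, i.e. $H=G\ast K_1$---one has $U=V(G)$ and no target exists directly, so I would first shrink $U$ by rerouting: for $e=ab\in F$, moving $e\cup\{w\}$ to $e\cup\{c\}$ for a suitable $c\in V(G)$ with $x_c\geq x_w$ and $\{a,b,c\}\notin E(H)$ is admissible by \autoref{thm:move-edges} and does not decrease $\lambda^{(p)}$ (the case in which $w$ attains the maximum entry being handled separately, since the final merge of \autoref{thm:delete-vetex-increase-spectral-radius} needs no condition on the eigenvector). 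After reducing the degree of $w$, a candidate $v^*\notin U$ fails (ii) only if, together with some $ab\in F$, it forms a triple $\{a,b,v^*\}\in E(H)$; by injectivity of $\phi$ this triple comes from a graph edge $av^*$ or $bv^*$ whose third vertex is the opposite endpoint of $e$. The plan is to show that if no admissible $v^*$ remained---so that the $F$-edges together with these ``blocking'' graph edges covered all of $V(G)$---then $G$ would contain a cycle of length at most $5$ (contradicting $g(G)\geq 6$) or would have diameter at most $4$ (contradicting the tree case). Exactly the excluded graphs, namely stars and short cycles, realise these obstructions, which is why the diameter and girth thresholds are sharp.

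The hard part will be this last, purely combinatorial, step: the analytic content is supplied for free by the monotonicity results of Section~\ref{sec2}, but converting ``every vertex of $G$ is either covered by $F$ or blocked by an incident hyperedge'' into a short cycle or a short path is the genuinely delicate argument, and it must be organised so that no intermediate operation ever creates a multiple edge, keeping each hypergraph a legitimate Berge-$G$ $3$-graph throughout.
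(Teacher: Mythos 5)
Your proposal assembles the right tools (Lemma~\ref{lem:v(H)-leq-v(G)+r-2} to get down to $k+1$ vertices, then a merge via Theorem~\ref{thm:delete-vetex-increase-spectral-radius}), but it stops exactly where the proof actually has to be done, and you say so yourself: the ``purely combinatorial'' step of producing an admissible target $v^*$ is announced as a plan, not carried out. Two concrete gaps remain. First, the case where the extra vertex $u$ carries the maximum eigenvector entry is not ``handled separately'' by any argument you give: in that situation you cannot reroute edges away from $u$ by Theorem~\ref{thm:move-edges} (the eigenvector inequality points the wrong way), and if $L(u)$ covers all of $V(G)$ (e.g.\ $\overline{H}=G\ast u$) then no vertex of $G$ satisfies the disjointness hypotheses of Theorem~\ref{thm:delete-vetex-increase-spectral-radius}, so the ``final merge needs no eigenvector condition'' remark does not rescue you. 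The paper disposes of this case by a different device (its Claim on $x_u<x_w$): assuming $x_u\geq x_w$ forces $\lambda^{(p)}(G\ast u)=\lambda^{(p)}(\overline{H})$, and then one builds a strictly better Berge-$G$ hypergraph $G'\ast v_2$ by merging two vertices of $G$ at distance $3$ --- using Theorem~\ref{thm:delete-vetex-increase-spectral-radius} for $p>1$ and Motzkin--Straus (the merge creates a triangle, raising the clique number from $2$ to $3$) for $p=1$. Nothing in your outline plays this role.

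Second, your endgame is organised around merging the extra vertex $u$ into some uncovered, unblocked $v^*\in V(G)$, which is why you are forced into the covering argument you flag as delicate. The paper avoids that argument entirely: it takes $w$ to be the vertex of $V(G)$ with maximum eigenvector entry, uses diam$(G)\geq 5$ or $g(G)\geq 6$ only to find a vertex $w^*$ at distance $3$ from $w$ (so $w,w^*$ are nonadjacent with no common neighbour), merges $w$ into $w^*$ inside the graph $G$ to form $G''$, and then takes $H=G''\ast w$, reusing the freed vertex $w$ as a universal apex. This single distance-$3$ observation is where the hypotheses enter, and it replaces the global ``every vertex is covered or blocked $\Rightarrow$ short cycle or small diameter'' claim that you would still have to prove (and which, as stated, would also need care about multiple edges at every rerouting step). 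As it stands the proposal is an outline whose essential content --- both the maximum-entry case and the existence of the merge target --- is missing.
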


\begin{proof}
Let $p\geq 1$. By \autoref{lem:v(H)-leq-v(G)+r-2}, there is a $3$-graph $\overline{H}\in\mathcal{M}^3(G)$
such that $v(\overline{H})\leq k+1$. If $v(\overline{H})=k$, we are done. Now we assume $v(\overline{H})=k+1$.
In the following we construct a $3$-graph $H$ from $\overline{H}$ such that $v(H)=k$ while
$\lambda^{(p)}(H)\geq\lambda^{(p)}(\overline{H})$. Assume that $\bm{x}\in\mathbb{S}_{p,+}^k$ is an eigenvector
of $\overline{H}$ corresponding to $\lambda^{(p)}(\overline{H})$. We shall further assume that $w$ is a vertex
in $V(G)$ such that $x_w=\max\{x_v: v\in V(G)\}$. Let $u$ denote the unique vertex in $V(\overline{H})\backslash V(G)$.

\begin{claim}\label{claim:u<w}
$x_u<x_w$.
\end{claim}

\noindent\textit{Proof of \autoref{claim:u<w}}. If $x_u\geq x_w$, then
$\lambda^{(p)}(G\ast u)\geq\lambda^{(p)}(\overline{H})$ by \autoref{thm:move-edges}.
If $\lambda^{(p)}(G\ast u)>\lambda^{(p)}(\overline{H})$, we are done by contradiction.
Otherwise, we assume $\lambda^{(p)}(G\ast u)=\lambda^{(p)}(\overline{H})$. Recall
that $G$ is either a tree with diam$(G)\geq 5$ or a connected graph with $g(G)\geq 6$,
we have two vertices $v_1$ and $v_2$ in $V(G)$ such that the distance dist$_G(v_1,v_2)=3$. Let $G'$
be the graph obtained from $G$ by deleting $v_2$ and adding edges $\{v_1v: v\in N_G(v_2)\}$.
Consider the hypergraph $G'\ast v_2$. For each edge $e\in E(G)$, if $v_2\notin e$, we embed
$e$ into $e\cup\{v_2\}$; if $v_2\in e$, we embed $e$ into $e\cup\{v_1\}$. Therefore,
$G'\ast v_2$ is a Berge-$G$ hypergraph.

If $p>1$, then $\lambda^{(p)}(G')>\lambda^{(p)}(G)$ by \autoref{thm:delete-vetex-increase-spectral-radius}.
If $p=1$, noting that the clique number of $G'$ is $\omega(G')=3$ and the clique number
of $G$ is $\omega(G)=2$. In light of Motzkin--Straus theorem (see \cite{MotzkinStraus1965}),
we have
\[
\lambda^{(1)}(G')=1-\frac{1}{3}>1-\frac{1}{2}=\lambda^{(1)}(G).
\]
It follows from \autoref{lem:H-vee-K1} that
$\lambda^{(p)}(G'\ast w)>\lambda^{(p)}(G\ast u)=\lambda^{(p)}(\overline{H})$ for $p\geq 1$.
This is a contradiction to the fact that $\overline{H}\in\mathcal{M}^3(G)$.
The proof of the claim is completed.
\vspace{2mm}

Fix $w$ and denote $D_i$ the set of vertices of $\overline{H}$ whose distance from $w$ is
$i$ in $\overline{H}$. Since $G$ either is a tree with diam$(G)\geq 5$
or a connected graph with $g(G)\geq 6$, we have $D_3\neq\emptyset$.
Let $w^*$ be a vertex in $D_3$, then $w^*v\notin E(G)$ for any $v\in N_G(w)$.

Let $G''$ be the graph obtained from $G$ by deleting $w$ and adding edges $\{w^*v: v\in N_G(w)\}$.
Now, we denote $H:=G''\ast w$. Since $x_u<x_w$, in view of \autoref{thm:move-edges} and
\autoref{thm:delete-vetex-increase-spectral-radius}, we see
\begin{equation}\label{eq:xu<xw-H>overline-H}
\lambda^{(p)}(H)\geq\lambda^{(p)}(\overline{H}),
\end{equation}
as desired. When $p>2$, then \eqref{eq:xu<xw-H>overline-H} is strict,
a contradiction to $\overline{H}\in\mathcal{M}^3(G)$.
\end{proof}

We immediately obtain the following corollary.

\begin{corollary}\label{coro:v(H)=v(G)}
Let $G$ be either a path $P_k$ or a cycle $C_k$ $(k\geq 6)$.
If $p\geq 1$, then there is a $3$-graph $H\in\mathcal{M}^3(G)$ such that $v(H)=v(G)$.
Furthermore, if $p>2$, then $v(H)=v(G)$ for each $H\in\mathcal{M}^3(G)$.
\end{corollary}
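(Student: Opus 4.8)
The plan is to derive this corollary directly from \autoref{thm:k-vertices-general-G}, since both of the relevant families fall under its hypotheses. Concretely, I would check that for $k\geq 6$ the path $P_k$ satisfies condition $(1)$ and the cycle $C_k$ satisfies condition $(2)$ of that theorem, and observe that in each case $v(G)=k$.

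For the path, $P_k$ is a tree on $k$ vertices, so to invoke condition $(1)$ I only need the diameter bound. The two endpoints of $P_k$ are at distance $k-1$, and no two vertices are farther apart, so $\mathrm{diam}(P_k)=k-1\geq 5$ whenever $k\geq 6$; thus $P_k$ meets condition $(1)$. For the cycle, $C_k$ contains a cycle, and the shortest cycle it contains is $C_k$ itself, so its girth is $g(C_k)=k\geq 6$; thus $C_k$ meets condition $(2)$.

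In either case $v(G)=k$, so \autoref{thm:k-vertices-general-G} immediately supplies a $3$-graph $H\in\mathcal{M}^3(G)$ with $v(H)=k=v(G)$; and when $p>2$ the ``furthermore'' clause of that theorem upgrades this to $v(H)=k=v(G)$ for every $H\in\mathcal{M}^3(G)$. I do not expect any genuine obstacle here: all of the analytic content is already contained in \autoref{thm:k-vertices-general-G}, and the only work is translating the hypotheses of $P_k$ and $C_k$ into the diameter and girth conditions, which is immediate from $k\geq 6$.
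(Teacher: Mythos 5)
Your proposal is correct and matches the paper exactly: the paper derives this corollary as an immediate consequence of \autoref{thm:k-vertices-general-G}, with $P_k$ ($k\geq 6$) satisfying the diameter condition $\mathrm{diam}(P_k)=k-1\geq 5$ and $C_k$ satisfying the girth condition $g(C_k)=k\geq 6$. Nothing further is needed.
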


Before we show the proof of \autoref{thm:main-result-path}, we will prove the following
theorem.

\begin{theorem}\label{thm:intersect-a-vertex}
Suppose that $G$ is either a path $P_k$ or a cycle $C_k$ $(k\geq 6)$. Let $p\geq 1$, then
there is a $3$-graph $H\in\mathcal{M}^3(G)$ such that all edges of $H$ intersect at a common
vertex. Furthermore, if $p>2$, then all edges of $H$ intersect at a common vertex for each
$H\in\mathcal{M}^3(G)$.
\end{theorem}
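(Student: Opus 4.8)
The plan is to start from an optimal hypergraph on exactly $k$ vertices, which \autoref{coro:v(H)=v(G)} provides, and to redirect every hyperedge through a single vertex of largest eigenvector weight by means of the edge‑moving operation of \autoref{thm:move-edges}. Concretely, I would take $H\in\mathcal{M}^3(G)$ with $v(H)=k$, let $\bm{x}$ be a nonnegative eigenvector of $H$ for $\lambda^{(p)}(H)$, and fix $v^*\in V(H)$ with $x_{v^*}=\max_v x_v$. Since $v(H)=v(G)$ and every vertex of a path or cycle has positive degree, the vertex set of $G$ embeds bijectively into $V(H)$, so I may identify $V(H)=V(G)$; then each hyperedge has the form $e\cup\{c_e\}$ with core edge $e\in E(G)$ and a third vertex $c_e$, and crucially $\deg_G(v^*)\le 2$.

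Call a hyperedge \emph{good} if it contains $v^*$. A hyperedge whose core $e$ is incident to $v^*$ is automatically good. For the remaining hyperedges $e\cup\{c_e\}$ with $v^*\notin e$, I want to move the third vertex $c_e$ to $v^*$. When $e\cup\{v^*\}\notin E(H)$ this is a genuine edge‑move from $c_e$ to $v^*$; since $x_{v^*}\ge x_{c_e}$, \autoref{thm:move-edges} gives $\lambda^{(p)}(H')\ge\lambda^{(p)}(H)$, with strict inequality once $p>2$.

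The main obstacle is the creation of a multiple edge, i.e.\ that $e\cup\{v^*\}$ already lies in $E(H)$. Writing $e=\{a,b\}$, the hyperedge $\{a,b,v^*\}$ cannot have core $\{a,b\}$ (that core already names $\{a,b,c_e\}$), so its core is $\{a,v^*\}$ or $\{b,v^*\}$, an edge of $G$ incident to $v^*$; as $\deg_G(v^*)\le 2$ there are at most two such blocking edges. Each is resolved by a local swap: if $\phi(\{a,v^*\})=\{a,b,v^*\}$, reassign $\phi(\{a,b\}):=\{a,b,v^*\}$ and $\phi(\{a,v^*\}):=\{a,v^*,c_e\}$. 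Both resulting hyperedges contain $v^*$, and at the level of edge sets this merely deletes $\{a,b,c_e\}$ and inserts $\{a,v^*,c_e\}$, so a direct computation gives
\[
P_{H'}(\bm{x})-P_H(\bm{x})=3\,x_a x_{c_e}\bigl(x_{v^*}-x_b\bigr)\ge 0,
\]
whence $\lambda^{(p)}(H')\ge P_{H'}(\bm{x})\ge P_H(\bm{x})=\lambda^{(p)}(H)$. Each move or swap strictly increases the number of good hyperedges (the swap removes one non‑good edge and replaces a good edge by another good edge), so after finitely many steps every edge contains $v^*$ while $\lambda^{(p)}$ never decreases; the terminal hypergraph therefore lies in $\mathcal{M}^3(G)$, settling the existence statement for all $p\ge 1$.

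For the uniqueness statement when $p>2$ I would argue by contradiction: \autoref{coro:v(H)=v(G)} forces $v(H)=k$ and \autoref{thm:positive-vector-p>r-1} makes $\bm{x}$ strictly positive. If some edge avoids $v^*$, then either a plain edge‑move applies and \autoref{thm:move-edges} yields $\lambda^{(p)}(H')>\lambda^{(p)}(H)$, a contradiction, or we are in the blocked case and perform the swap. In the latter, should $\lambda^{(p)}(H')=\lambda^{(p)}(H)$, then $\bm{x}$ is also an eigenvector of $H'$, and comparing the eigenequation \eqref{eq:eigenequation} at $v^*$ for $H$ and $H'$ yields a net increase of $x_a x_{c_e}>0$ in the link sum at $v^*$, contradicting that both sides equal $\lambda^{(p)}(H)\,x_{v^*}^{p-1}$; this neatly disposes of the awkward subcase $x_{v^*}=x_b$ where the polynomial‑form computation alone is not strict. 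The delicate point throughout, and the step I expect to be hardest, is the bookkeeping of multiplicities when a swap would itself collide with a pre‑existing edge $\{a,v^*,c_e\}$: here I would exploit $\deg_G(v^*)\le 2$ together with the spare vertices guaranteed by $k\ge 6$ to show a non‑colliding reassignment always exists.
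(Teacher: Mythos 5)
Your strategy is the paper's: fix an optimal $H$ on $k$ vertices (via \autoref{coro:v(H)=v(G)}), take a maximum-weight vertex $v^*$ of an eigenvector, and reroute every hyperedge through $v^*$ by \autoref{thm:move-edges}, resolving collisions by locally reassigning the Berge bijection $\phi$. Your ``swap'' (net effect: delete $\{a,b,c_e\}$, insert $\{a,v^*,c_e\}$) is precisely the paper's Case~2 and Subcase~1.2, and your eigenequation argument at $v^*$ for strictness when $p>2$ is the same mechanism the paper uses inside \autoref{thm:move-edges}.

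The one genuine gap is exactly the case you flag at the end, and the repair you hint at does not work as stated. Concretely, with $v^*=v_i$, $a=v_{i-1}$, $b=v_{i-2}$, the swap's target edge $\{a,v^*,c_e\}$ can already be present only when $c_e=v_{i+1}$ and the core $v_iv_{i+1}$ is embedded into $\{v_{i-1},v_i,v_{i+1}\}$. Replacing the intended third vertex by a ``spare'' vertex $w$ changes the polynomial form by $3x_a\bigl(x_{v^*}x_w-x_bx_{c_e}\bigr)$, whose sign you cannot control since $x_w$ may be arbitrarily small; so spare vertices do not rescue the swap, and $\deg_G(v^*)\le 2$ plays no role here. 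What does work --- and is the paper's Subcase~1.1 --- is a cyclic reassignment of three cores: $v_{i-2}v_{i-1}\mapsto\{v_{i-2},v_{i-1},v_i\}$, $v_{i-1}v_i\mapsto\{v_{i-1},v_i,v_{i+1}\}$, $v_iv_{i+1}\mapsto\{v_{i-2},v_i,v_{i+1}\}$. At the level of edge sets this deletes $\{v_{i-2},v_{i-1},v_{i+1}\}$ and inserts $\{v_{i-2},v_i,v_{i+1}\}$ (which one checks is not already an edge, as no core of $G$ other than $v_iv_{i+1}$ lies inside it), i.e., it is a legitimate single-edge move from $v_{i-1}$ to $v_i$, so \autoref{thm:move-edges} applies with $x_{v_i}\ge x_{v_{i-1}}$, strictly for $p>2$. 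With this case supplied, your argument closes and coincides with the paper's proof.
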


\begin{proof}
Let $P_k:= v_1v_2\cdots v_{k-1}v_k$ be a path on $k$ vertices. By \autoref{coro:v(H)=v(G)},
there is a $\overline{H}\in\mathcal{M}^3(P_k)$ such that $v(\overline{H})=k$. Let
$\bm{x}\in\mathbb{S}_{p,+}^{k-1}$ be an eigenvector corresponding to $\lambda^{(p)}(\overline{H})$,
and $v_i$ be a vertex attaining the maximum in $\bm{x}$. By \autoref{thm:move-edges}, we
can further assume that $e$ is embedded into $e\cup\{v_i\}$ of $\overline{H}$ for each
$e\in E(P_k)\backslash\{v_{i-2}v_{i-1},v_{i-1}v_i,v_iv_{i+1},v_{i+1}v_{i+2}\}$,
and $\{v_{i-2},v_{i-1},v_i\}\in E(\overline{H})$, $\{v_i,v_{i+1},v_{i+2}\}\in E(\overline{H})$.

In the following we will construct a hypergraph $H\in\mathcal{M}^3(P_k)$ from $\overline{H}$ such that
all edges of $H$ contain vertex $v_i$. Without loss of generality, we assume $i\geq 3$.
If $v_{i-2}v_{i-1}$ and $v_{i+1}v_{i+2}$ are embedded into $\{v_{i-2}, v_{i-1},v_i\}$ and
$\{v_i,v_{i+1},v_{i+2}\}$, we are done. Otherwise, without loss of generality that $v_{i-2}v_{i-1}$
is embedded into $\{v_{i-2},v_{i-1},u\}$, where $u\neq v_i$. Then $v_{i-1}v_i$ is embedded into
$\{v_{i-2},v_{i-1},v_i\}\in E(\overline{H})$. We consider the following two cases.

{\bfseries Case 1.} $u=v_{i+1}$.

{\bfseries Subcase 1.1.} If $\{v_{i-1},v_i,v_{i+1}\}\in E(\overline{H})$, then $v_iv_{i+1}$ is embedded
into $\{v_{i-1},v_i,v_{i+1}\}$. Now we embed $v_{i-2}v_{i-1}$ into $\{v_{i-2},v_{i-1},v_i\}$, embed
$v_{i-1}v_i$ into $\{v_{i-1},v_i,v_{i+1}\}$, embed $v_iv_{i+1}$ into $\{v_{i-2},v_i,v_{i+1}\}$,
and keep the other edge embeddings as $\overline{H}$. Denote the resulting hypergraph by $H'$. Clearly,
$H'$ is a Berge-$P_k$ hypergraph. However, $\lambda^{(p)}(H')\geq\lambda^{(p)}(\overline{H})$ by
\autoref{thm:move-edges}.

{\bfseries Subcase 1.2.} If $\{v_{i-1},v_i,v_{i+1}\}\notin E(\overline{H})$, then we embed $v_{i-2}v_{i-1}$
into $\{v_{i-2},v_{i-1},v_i\}$, embed $v_{i-1}v_i$ into $\{v_{i-1},v_i,v_{i+1}\}$, and keep the
other edge embeddings as $\overline{H}$. Denote the resulting hypergraph by $H'$. Then
$\lambda^{(p)}(H')\geq\lambda^{(p)}(H)$ by \autoref{thm:move-edges}.

{\bfseries Case 2.} $u\neq v_{i+1}$. We embed $v_{i-2}v_{i-1}$ into $\{v_{i-2}, v_{i-1},v_i\}$,
embed $v_{i-1}v_i$ into $\{v_{i-1},v_i,u\}$, and keep the other edge embeddings as $\overline{H}$.
Denote the resulting hypergraph by $H'$. Then $\lambda^{(p)}(H')\geq\lambda^{(p)}(H)$ by
\autoref{thm:move-edges}.

Now, if $v_iv_{i+1}$ is embedded into $\{v_i,v_{i+1},v_{i+2}\}$, we choose $H=H'$ and obtain the
desired $H$. If not, we can repeat the above discussion for $v_{i+1}v_{i+2}$ similarly, and get
the desired $H$.

Finally, if $p>2$, then the above inequalities are both strict, which yields a contradiction.
The result follows.

The proof for cycle is analogous to that for path, so we omit the proof.
\end{proof}

Let $F(u,v;h,j)$ be the graph obtained from a cycle $C_{\ell}$ by attaching two pendent paths
of lengths $h\geq 0$, $j\geq 0$ at vertices $u\in V(C_{\ell})$ and $v\in V(C_{\ell})$, respectively.
For graphs, $G$ is called {\em $F(u,v;h,j)$-type} if $G\cong F(u,v;h,j)$ for some $u=u^*$, $v=v^*$,
$h=h^*$ and $j=j^*$.

We are now ready to prove \autoref{thm:main-result-path} and \autoref{thm:main-result-cycle}.
\vspace{3mm}

\noindent{\bfseries Proof of \autoref{thm:main-result-path}.}~~
Let $p\geq 1 $ and $P_k:= v_1v_2\cdots v_{k-1}v_k$ be a path on $k$ vertices.
Assume that $H\in\mathcal{M}^3(P_k)$, $\bm{x}\in\mathbb{S}_{p,+}^{k-1}$ is an eigenvector of $H$
corresponding to $\lambda^{(p)}(H)$, and $v_i$ is a vertex such that $x_{v_i}=\max\{x_v: v\in V(H)\}$.
By \autoref{thm:intersect-a-vertex}, we can further assume that $v(H)=k$ and all edges of $H$
intersect at the common vertex $v_i$.

In other words, each edge $e\in E(P_k)\backslash \{v_{i-1}v_i,v_iv_{i+1}\}$ is embedded into
$e\cup\{v_i\}$ of $H$. Suppose that $v_{i-1}v_i$ is embedded into $\{v_{i-1},v_i,v_s\}\in E(H)$
and $v_iv_{i+1}$ is embedded into $\{v_i,v_{i+1},v_t\}$. Let $G^*$ be the core of $H$, that is,
$H=G^*\ast v_i$.

Clearly, the clique number of $G^*$ at most $3$. If $p=1$, then $\lambda^{(1)}(G^*)\leq 2/3$.
The result follows from \autoref{lem:H-vee-K1}. In the following we assume $p>1$.

{\bfseries Case 1.} $i\geq k-2$. In this case the core $G^*$ of $H$ is $F(u,v;h,j)$-type.

{\bfseries Case 2.} $i\leq k-3$. If $s\leq i-3$ and $t\geq i+3$, it is claimed that we can
further assume that $v_s=v_t$. If not, we let $H_1$ be the hypergraph obtained from $H$ by
moving edge $\{v_{i-1},v_i,v_s\}$ from $v_s$ to $v_t$, and let $H_2$ be the hypergraph obtained
from $H$ by moving edge $\{v_i,v_{i+1},v_t\}$
from $v_t$ to $v_s$. Obviously, both $H_1$ and $H_2$ are Berge-$P_k$ hypergraphs. Since either
$x_{v_s}\geq x_{v_t}$ or $x_{v_s}\leq x_{v_t}$, in view of \autoref{thm:move-edges}, we obtain
\[
\max\{\lambda^{(p)}(H_1),\lambda^{(p)}(H_2)\}\geq\lambda^{(p)}(H).
\]
Therefore, $G^*$ is $F(u,v;h,j)$-type.
If $s\geq i-2$ or $t\leq i+2$, we also have that $G^*$ is $F(u,v;h,j)$-type.

According to Case 1, Case 2 and \autoref{lem:H-vee-K1}, it suffices to determine the graphs with
maximum $p$-spectral radius among $F(u,v;h,j)$ on $(k-1)$ vertices. Using a similar arguments as
Case 2, we see $u=v$.

For short, we denote $C_{\ell}(u;h,j):=F(u,u;h,j)$, the graph obtained from a $\ell$-cycle $C_{\ell}$ by
attaching two pendent paths of lengths $h$ and $j$ at $u\in V(C_{\ell})$, respectively (see Fig.\,\ref{fig:core}).

\begin{figure}[htbp]
\centering
\begin{tikzpicture}
  \foreach \i in {0,1,3,4,5,7}
  {
  \coordinate (v\i) at (45*\i:1.2);
  \filldraw (v\i) circle (0.06);
  }
  \foreach \i in {-3,-2,...,3}
  {
  \coordinate (u\i) at (1.2*\i,1.3);
  \filldraw (u\i) circle (0.06);
  }
  \node[above] at (u-1) {$u_1$};
  \node[above] at (u-2) {$u_{j-1}$};
  \node[above] at (u-3) {$u_j$};
  \node[above] at (0,1.3) {$u$};
  \node[above] at (u1) {$v_1$};
  \node[above] at (u2) {$v_{h-1}$};
  \node[above] at (u3) {$v_h$};
  \node[right] at (v0) {$w_2$};
  \node[right] at (v1) {$w_1$};
  \node[left] at (v3) {$w_{\ell-1}$};
  \node[left] at (v4) {$w_{\ell-2}$};
  \node at ($(u-2)!0.5!(u-1)$) {$\cdots$};
  \node at ($(u1)!0.5!(u2)$) {$\cdots$};
  \draw (u-3)--(u-2);
  \draw (u-1)--(0,1.3)--(u1);
  \draw (u2)--(u3);
  \draw (v0)--(v1)--(0,1.3)--(v3)--(v4)--(v5);
  \draw (v7)--(v0);
  \filldraw (255:1.1) circle (0.02);
  \filldraw (270:1.1) circle (0.02);
  \filldraw (285:1.1) circle (0.02);
\end{tikzpicture}
  \caption{\label{fig:core} Graph $C_{\ell}(u;h,j)$}
\end{figure}

In the following we assume that $C_{g}(u;a,b)$ is a graph attaining the maximum $p$-spectral radius
among $C_{\ell}(u;h,j)$. Let $\bm{y}\in\mathbb{S}_{p,++}^{k-2}$ be an eigenvector to $\lambda^{(p)}(C_{g}(u;a,b))$.
Without loss of generality, we assume $a\geq b\geq 0$.

\begin{claim}\label{claim:g<k-1}
$g<k-1$.
\end{claim}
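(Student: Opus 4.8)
The plan is to argue by contradiction. If $g=k-1$, then since the core $C_g(u;a,b)$ lives on $k-1$ vertices (recall $H=G^{*}\ast v_i$ with $v(H)=k$, so $\bm{y}\in\mathbb{S}_{p,++}^{k-2}$) and $g+a+b=k-1$, we must have $a=b=0$; hence $C_g(u;a,b)$ is the pure cycle $C_{k-1}$. I will show that $C_{k-1}$ cannot attain the maximum $p$-spectral radius among the graphs $C_{\ell}(u;h,j)$, by exhibiting a single edge-move that strictly increases $\lambda^{(p)}$.

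First I would recall that we are in the case $p>1$ (the case $p=1$ having been disposed of earlier via the clique-number bound together with \autoref{lem:H-vee-K1}), and that $r=2$ for graphs, so $p>r-1$. Let $\bm{y}$ be an eigenvector of $C_{k-1}$ corresponding to $\lambda^{(p)}(C_{k-1})$, which is positive by \autoref{thm:positive-vector-p>r-1}. Choose a vertex of maximum weight, call it $w_1$, and label the cycle consecutively as $w_1w_2\cdots w_{k-1}w_1$.

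The key step is to move the edge $\{w_3,w_4\}$ from $w_4$ to $w_1$, replacing it by $\{w_1,w_3\}$. Since $k\geq 6$ forces $k-1\geq 5$, the vertices $w_1$ and $w_3$ are non-adjacent in $C_{k-1}$, so the resulting graph has no multiple edge; it consists of the triangle $w_1w_2w_3$ together with the pendent path $w_1w_{k-1}\cdots w_4$, i.e.\ it is exactly $C_3(w_1;k-4,0)$, again a graph of the form $C_{\ell}(u;h,j)$ on $k-1$ vertices. Because $w_1$ has maximum weight, $y_{w_1}\geq y_{w_4}$, so \autoref{thm:move-edges} applies and, as $p>r-1$, yields the strict inequality $\lambda^{(p)}(C_3(w_1;k-4,0))>\lambda^{(p)}(C_{k-1})$. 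This contradicts the assumption that $C_g(u;a,b)=C_{k-1}$ maximizes the $p$-spectral radius, and since $g\leq k-1$ always holds, we conclude $g<k-1$.

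The main point requiring care---though it is not deep---is verifying that the edge-move lands inside the admissible family $C_{\ell}(u;h,j)$ and produces no multiple edges, which is precisely where the hypothesis $k\geq 6$ enters; choosing $w_1$ to be a vertex of maximum eigenvector weight is what guarantees the sign condition $y_{w_1}\geq y_{w_4}$ needed to invoke \autoref{thm:move-edges} without any explicit knowledge of the eigenvector.
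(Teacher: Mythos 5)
Your proof is correct and follows the same strategy as the paper: assume $g=k-1$, observe that the graph must then be the bare cycle $C_{k-1}$, and contradict maximality by exhibiting another member of the family $C_{\ell}(u;h,j)$ on $k-1$ vertices with strictly larger $p$-spectral radius. The paper simply asserts $\lambda^{(p)}(C_{k-3}(u;1,1))>\lambda^{(p)}(C_{k-1})$ as ``clear,'' whereas you use the competitor $C_3(w_1;k-4,0)$ and justify the strict inequality explicitly via \autoref{thm:move-edges} (checking non-adjacency of $w_1$ and $w_3$, which is where $k\geq 6$ enters); your version is, if anything, more complete.
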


\noindent\textit{Proof of \autoref{claim:g<k-1}.}
If not, then $C_g(u;a,b)$ is exactly $C_{k-1}$. Clearly, $\lambda^{(p)}(C_{k-3}(u;1,1))>\lambda^{(p)}(C_{k-1})$,
a contradiction to $\lambda^{(p)}(C_{k-1}\ast K_1)\in\mathcal{M}^3(P_k)$. The proof of the claim is completed.

\begin{claim}\label{claim:g=3}
$g=3$.
\end{claim}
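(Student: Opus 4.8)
The plan is to show that the girth-$g$ graph $C_g(u;a,b)$ maximizing the $p$-spectral radius among all $C_\ell(u;h,j)$ on $k-1$ vertices must in fact have $g=3$, by ruling out $g\geq 4$. Since \autoref{claim:g<k-1} already gives $g<k-1$, we know the cycle is proper and there is a nonempty pendent-path structure. The natural strategy is a local surgery on the cycle: take an edge of the cycle that is ``far'' from the attachment vertex $u$ (say an antipodal edge $w_jw_{j+1}$), delete it, and instead add a new edge creating a triangle at $u$, thereby strictly decreasing the girth while comparing the two $p$-spectral radii through the Perron eigenvector $\bm{y}$. The two candidate tools are \autoref{thm:move-edges} (edge shifting guided by the relative sizes of eigenvector entries) and \autoref{thm:G(k,s)>G(k+1,s-1)} (balancing two pendent paths at a common vertex); I expect the proof to combine a cycle-shortening move with an appeal to these.

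\textbf{First} I would set up notation matching Fig.\,\ref{fig:core}: label the cycle vertices $u=w_0,w_1,\ldots,w_{g-1}$ and the two pendent paths $v_1,\ldots,v_h$ and $u_1,\ldots,u_j$ at $u$, and let $\bm{y}$ be the positive eigenvector for $\lambda^{(p)}(C_g(u;a,b))$ guaranteed by \autoref{thm:positive-vector-p>r-1} (recall we are in the case $p>1$, and for the strict-uniqueness half of the argument $p>2$ so that $p>r-1=2$). I would then argue that, along either half of the cycle, the eigenvector entries are \emph{monotone}: entries decrease as one moves away from $u$ into the cycle. This monotonicity is the key structural fact and follows from the eigenequations \eqref{eq:eigenequation}, by an argument analogous to the induction in the proof of \autoref{thm:G(k,s)>G(k+1,s-1)}; the entry $y_u$ at the attachment vertex is the largest among the cycle vertices.

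\textbf{Next}, assuming $g\geq 4$ for contradiction, I would perform the shortening surgery. Consider the cycle edge incident to a vertex $w$ with the smallest eigenvector value (an ``antipodal'' vertex). I would delete an appropriate cycle edge and add an edge joining two vertices whose concatenation produces a triangle through $u$ while keeping the total vertex count $k-1$ fixed and keeping the resulting graph of the form $C_{g'}(u;a',b')$ with $g'<g$ (absorbing the removed cycle arc into a lengthened pendent path). Using \autoref{thm:move-edges} with $\bm{y}$ and the monotonicity established above — so that we are always moving an edge \emph{towards} the vertex of larger eigenvector value — each such move does not decrease (and for $p>2$ strictly increases) the $p$-spectral radius. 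This contradicts the maximality of $C_g(u;a,b)$, forcing $g=3$.

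\textbf{The main obstacle} I anticipate is making the single-step surgery genuinely produce a smaller-girth graph \emph{within the allowed family} $C_\ell(u;h,j)$ while controlling the sign of the eigenvector difference: a naive edge move may shorten the cycle but leave a graph that is no longer of the prescribed ``cycle plus two pendent paths at a common vertex'' shape, or may require moving an edge in the unfavorable direction (towards a smaller entry), in which case \autoref{thm:move-edges} gives the wrong inequality. Handling this likely requires a careful case split on the parities/lengths $a,b,g$ and possibly an auxiliary rebalancing of the two pendent paths via \autoref{thm:G(k,s)>G(k+1,s-1)} (which lets me assume $|a-b|\leq 1$) before doing the cycle-shortening move. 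The bookkeeping to verify that the modified graph is again of the form $C_{g-1}(u;\cdot,\cdot)$, and that the governing product $(y_a-y_b)(y_c-y_d)$ in the expansion has the right sign, is where the real work lies; the rest is a routine application of the edge-shifting lemmas already established.
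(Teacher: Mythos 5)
There is a genuine gap here, on two fronts. First, your proposed surgery --- cut an antipodal cycle edge and add an edge forming a triangle at $u$ --- produces a graph that is \emph{not} of the form $C_{\ell}(u;h,j)$: after deleting $w_jw_{j+1}$ and adding $w_1w_{g-1}$, the two arcs of the old cycle become pendent paths hanging at $w_1$ and $w_{g-1}$, not at $u$. Since the maximality of $C_g(u;a,b)$ is only assumed within the family $C_{\ell}(u;h,j)$ (and one would separately have to check that the new graph is even a legitimate core of a Berge-$P_k$ hypergraph), no contradiction follows. You flag this as ``the main obstacle,'' but resolving it is the entire content of the claim, and your plan does not resolve it. Note also that this deletion/addition is not an edge move in the sense of \autoref{thm:move-edges} (the deleted and added edges share no vertex), so that theorem does not apply; one needs the direct two-switch computation of the form $2(y_a-y_b)(y_c-y_d)$ used throughout \autoref{thm:G(k,s)>G(k+1,s-1)}.

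Second, the inequality your argument would need is not the one your monotonicity claim supplies. The paper's surgery deletes $\{uv_1,\,w_1w_2\}$ and adds $\{uw_2,\,v_1w_1\}$, which shortens the cycle by exactly one while absorbing $w_1$ into the pendent path, yielding $C_{g-1}(u;a+1,b)$ --- a graph that stays in the family. The sign of the resulting increment $2(y_u-y_{w_1})(y_{w_2}-y_{v_1})$ hinges on the comparison $y_{v_1}<y_{w_2}$ between a \emph{pendent-path} entry and a \emph{cycle} entry; monotonicity of the eigenvector along the cycle (which you assert without proof) says nothing about this. The paper establishes $y_{v_1}<y_{w_2}$ by a separate case analysis on $a\in\{1,2,\geq 3\}$, each case using an in-family two-switch together with the eigenequations \eqref{eq:eigenequation} to rule out equality. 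Your proposal is missing both this key inequality and the specific girth-reducing move that keeps the comparison inside the family, so as written it does not close.
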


\noindent\textit{Proof of \autoref{claim:g=3}.}
Suppose for a contradiction that $g\geq 4$. By \autoref{claim:g<k-1}, $a\geq 1$. We first prove
that $y_{v_1}<y_{w_2}$.

{\bfseries Case 1. } $a=1$. Let $G_4$ be the graph obtained from $C_g(u;1,b)$ by deleting
edge $w_1w_2$ and adding edge $v_1w_1$. Then
\[
0\geq\lambda^{(p)}(G_4)-\lambda^{(p)}(C_g(u;1,b)) \geq
2(y_{v_1}-y_{w_2})y_{w_1},
\]
which implies that $y_{v_1}\leq y_{w_2}$. If $y_{v_1}=y_{w_2}$, then $\lambda^{(p)}(G_4)=\lambda^{(p)}(C_g(u;1,b))$,
and $\bm{y}$ is also an eigenvector of $G_4$ corresponding to $\lambda^{(p)}(G_4)$. By using eigenequations \eqref{eq:eigenequation}
for $v_1$, we have
\[
0=(\lambda^{(p)}(G_4)-\lambda^{(p)}(C_g(u;1,b))) y_{v_1}^{p-1}=y_{w_1}>0,
\]
a contradiction. Hence, $y_{v_1}<y_{w_2}$.

{\bfseries Case 2.} $a=2$. In view of eigenequations \eqref{eq:eigenequation}, we see
\[
\lambda^{(p)}(C_g(u;2,b)) y_{v_2}^{p-1}=y_{v_1},~~\lambda^{(p)}(C_g(u;2,b)) y_{w_1}^{p-1}
=y_u+y_{w_2}.
\]
It follows that $y_{v_2}<y_{w_1}$. Denote
\[
G_5:=C_g(u;2,b)-\{w_1w_2,v_1v_2\}+\{v_1w_1,v_2w_2\}.
\]
Therefore we have
\[
0\geq\lambda^{(p)}(G_5)-\lambda^{(p)}(C_g(u;2,b))\geq 2(y_{v_1}-y_{w_2})(y_{w_1}-y_{v_2}),
\]
which yields that $y_{v_1}\leq y_{w_2}$. Using the similar argument as Case 1, we have
$y_{v_1}<y_{w_2}$.

{\bfseries Case 3.} $a\geq 3$.
Let $G_5$ be the graph obtained from $C_g(u;a,b)$ by deleting edges $\{uw_1,v_2v_3\}$
and adding edges $\{uv_2,w_1v_3\}$. Then
\[
0\geq\lambda^{(p)}(G_5)-\lambda^{(p)}(C_g(u;a,b))\geq 2(y_u-y_{v_3})(y_{v_2}-y_{w_1}).
\]
Recall that $y_u>y_{v_3}$. Hence, $y_{v_2}\leq y_{w_1}$. Let $G_6$ be the graph
obtained from $C_g(u;a,b)$ by deleting edges $\{w_1w_2,v_1v_2\}$ and adding
edges $\{v_1w_1,v_2w_2\}$. Then
\[
0\geq\lambda^{(p)}(G_6)-\lambda^{(p)}(C_g(u;a,b))\geq 2(y_{v_1}-y_{w_2})(y_{w_1}-y_{v_2}).
\]
It follows from $y_{v_2}\leq y_{w_1}$ that $y_{v_1}\leq y_{w_2}$. Using the similar
argument as Case 1, we have $y_{v_1}\neq y_{w_2}$. Therefore, $y_{v_1}<y_{w_2}$.

According to Case 1--3, we have $y_{v_1}<y_{w_2}$. Now we let $G_7$ be the graph obtained
from $C_g(u;a,b)$ by contracting edges $\{uw_1,w_1w_2\}$ and subdividing edge $uv_1$. In
light of $y_u>y_{w_1}$, we have
\[
\lambda^{(p)}(G_7)-\lambda^{(p)}(C_g(u;a,b))\geq 2(y_u-y_{w_1})(y_{w_2}-y_{v_1})>0,
\]
which yields a contradiction. The proof of the claim is completed.
\par\vspace{3mm}

Finally, in view of \autoref{claim:g=3} and \autoref{thm:G(k,s)>G(k+1,s-1)},
$\lambda^{(p)}(G^*)\leq \lambda^{(p)}(\Delta_1)$. It follows that
$\lambda^{(p)}(H)\leq\lambda^{(p)}(\Delta_1\ast K_1)$ from \autoref{lem:H-vee-K1}.
Furthermore, if $p>2$, we know $H\in\mathcal{M}^3(P_k)$ is unique by \autoref{thm:intersect-a-vertex}.
\hfill\ensuremath{\Box}
\par\vspace{4mm}

\noindent{\bfseries Proof of \autoref{thm:main-result-cycle}.} Using the similar argument
as the proof of \autoref{thm:main-result-path}, we can obtain
\autoref{thm:main-result-cycle}. \hfill\ensuremath{\Box}

\section{Proof of \autoref{thm:main-result-star}}
\label{sec4}

In this section, we shall prove \autoref{thm:main-result-star}. Before continuing, we
need the following lemma.

\begin{lemma}[\cite{Nikiforov2014:analytic-methods}]
\label{lem:nonincreasing}
Let $p\geq 1$, and $H$ be an $r$-uniform hypergraph with $m$ edges. Then the function
$(\lambda^{(p)}(H)/(rm))^p$ is nonincreasing in $p$. That is, for $p<q$ we have
\begin{equation}\label{eq:nonincreasing}
\bigg(\frac{\lambda^{p}(H)}{rm}\bigg)^p\geq
\bigg(\frac{\lambda^{q}(H)}{rm}\bigg)^q.
\end{equation}
Furthermore, let $\bm{x}\in\mathbb{S}_{q,+}^{v(H)-1}$ be an eigenvector to
$\lambda^{(q)}(H)$, if equality holds in \eqref{eq:nonincreasing}, then
$\prod_{u\in e} x_u=\prod_{v\in f} x_v$ for any edges $e\neq f$ of $H$.
\end{lemma}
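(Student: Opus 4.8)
The plan is to convert a maximizer for the larger exponent $q$ into an admissible test vector for the smaller exponent $p$ by a monomial change of variables, and then to compare the two resulting polynomial forms edge-by-edge through the convexity of $t\mapsto t^{q/p}$. Fix $p<q$. Since the maximum in \eqref{eq:definition-p-spectral-radius} is attained at a nonnegative vector, take the given nonnegative eigenvector $\bm{x}\in\mathbb{S}_{q,+}^{v(H)-1}$ for $\lambda^{(q)}(H)$, so that $\sum_i x_i^q=1$ and $P_H(\bm{x})=\lambda^{(q)}(H)$. Define $\bm{z}$ by $z_i:=x_i^{q/p}$; this is well defined because $q/p\ge 1$ and the entries are nonnegative, and it satisfies $\sum_i z_i^p=\sum_i x_i^q=1$, so $\bm{z}\in\mathbb{S}_{p,+}^{v(H)-1}$ is admissible for $\lambda^{(p)}(H)$. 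Writing $a_e:=\prod_{i\in e}x_i\ge 0$ for each edge $e$, the substitution gives $\prod_{i\in e}z_i=a_e^{q/p}$, hence $P_H(\bm{z})=r\sum_{e\in E(H)}a_e^{q/p}$, while $\sum_{e\in E(H)}a_e=P_H(\bm{x})/r=\lambda^{(q)}(H)/r$.

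The heart of the argument is then a single application of Jensen's inequality. Since $q/p>1$, the function $t\mapsto t^{q/p}$ is convex, so averaging over the $m$ edges with equal weights yields $\frac{1}{m}\sum_e a_e^{q/p}\ge\big(\frac{1}{m}\sum_e a_e\big)^{q/p}$, that is, $\sum_e a_e^{q/p}\ge m^{1-q/p}(\lambda^{(q)}(H)/r)^{q/p}$. Combining this with $\lambda^{(p)}(H)\ge P_H(\bm{z})=r\sum_e a_e^{q/p}$ gives $\lambda^{(p)}(H)\ge (rm)^{1-q/p}\,(\lambda^{(q)}(H))^{q/p}$. Dividing through by $rm$ produces $\lambda^{(p)}(H)/(rm)\ge(\lambda^{(q)}(H)/(rm))^{q/p}$, and raising both (nonnegative) sides to the power $p$ gives exactly \eqref{eq:nonincreasing}.

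For the equality case I would track which inequalities in the chain can be tight. The only genuine inequalities used are $\lambda^{(p)}(H)\ge P_H(\bm{z})$ and the Jensen step; if equality holds in \eqref{eq:nonincreasing}, both must be equalities. Because $t\mapsto t^{q/p}$ is strictly convex for $q/p>1$, equality in Jensen's inequality forces all the averaged quantities $a_e=\prod_{i\in e}x_i$ to coincide, which is precisely the asserted conclusion $\prod_{u\in e}x_u=\prod_{v\in f}x_v$ for all edges $e\ne f$. I expect the main (and essentially the only) subtlety to be spotting the right change of variables $z_i=x_i^{q/p}$: once it is in place, the constraint bookkeeping and the single convexity estimate are routine, and the strict-convexity characterization of equality drops out of the same computation.
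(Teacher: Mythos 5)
Your proof is correct: the substitution $z_i=x_i^{q/p}$ turns the $q$-norm maximizer into a unit vector for the $p$-norm, and the single Jensen (power-mean) step with the strictly convex map $t\mapsto t^{q/p}$ delivers both inequality \eqref{eq:nonincreasing} and the equality characterization $\prod_{u\in e}x_u=\prod_{v\in f}x_v$. The paper gives no proof of its own (it cites Nikiforov), and your argument is essentially the standard one from that source.
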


\begin{lemma}\label{lem:Sn>Sn^+}
Let $p>1$ and $n\geq 10$. Let $S_{n-1}^+$ be the graph obtained from star $S_{n-1}$ by
adding an edge. \\
$(1)$ If $p\geq 2$, then $\lambda^{(p)}(S_n)>\lambda^{(p)}(S_{n-1}^+)$ unless $p=2$, $n=10$.\\
$(2)$ If $1<p<2$, then $\lambda^{(p)}(S_n)>\lambda^{(p)}(S_{n-1}^+)$ for sufficiently large $n$.
\end{lemma}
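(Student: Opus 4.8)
The plan is to compute $\lambda^{(p)}(S_n)$ exactly and to reduce $\lambda^{(p)}(S_{n-1}^+)$ to a one-parameter quantity, after which the comparison becomes a scalar inequality. For the star, the power-mean inequality shows a maximizing vector makes all leaves equal (for fixed $p$-mass on the leaves, the linear objective $2a\sum(\cdot)$ is largest when the entries coincide, since $p>1$); writing $a$ for the centre value and $b$ for the common leaf value and maximizing $2(n-1)ab$ under $a^p+(n-1)b^p=1$ yields $a=2^{-1/p}$, $b=(2(n-1))^{-1/p}$, and hence
\[
\lambda^{(p)}(S_n)=2^{1-2/p}(n-1)^{1-1/p}.
\]

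For $S_{n-1}^+$ I would first argue that a maximizer is constant on each orbit of the automorphism group: value $a$ at the centre, $d$ at the two endpoints of the added edge, and $b$ at the remaining $N:=n-4$ pendant leaves. This is without loss of generality by the same power-mean reasoning (for fixed orbit mass, both $\sum(\cdot)$ and, on the special orbit, the product $d_1d_2$ are maximized at equal entries). By \autoref{thm:positive-vector-p>r-1} the maximizer is positive, so the eigenequations read $a=\lambda b^{p-1}$ (plain leaf), $a+d=\lambda d^{p-1}$ (special leaf), and $2d+Nb=\lambda a^{p-1}$ (centre). Setting $t=d/b\ge 1$ and $c=a/b$, these collapse to the two scalar relations
\[
c=\frac{t}{t^{p-1}-1},\qquad c^{p}=2t+N,
\]
together with $\lambda^{(p)}(S_{n-1}^+)=2^{1-2/p}\,c\,(t^p+c^p-t)^{1-2/p}$ and $n-1=c^p-2t+3$, so that the entire problem is parametrized by the single variable $t>1$. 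Cancelling the common factor $2^{1-2/p}$ and raising to the $p$-th power, the target inequality $\lambda^{(p)}(S_n)>\lambda^{(p)}(S_{n-1}^+)$ becomes
\[
(c^p-2t+3)^{p-1}>c^{p}\,(t^p+c^p-t)^{p-2},\qquad c=\frac{t}{t^{p-1}-1}.
\]

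Two checks fix the structure. At $p=2$ the exponent $p-2$ vanishes and the inequality reduces to $n-1>c^2=2t+(n-4)$, i.e. $t<3/2$; since $n=t^2/(t-1)^2-2t+4$ is strictly decreasing in $t$ near $t=3/2$ and equals $10$ there, this is exactly $n\ge 11$, recovering the unique exceptional pair $(p,n)=(2,10)$, where one verifies directly that both radii equal $3$ (the Perron root of $S_9^+$ solves $\mu^3-\mu^2-8\mu+6=0$, with $\mu=3$). The large-$n$ regime corresponds to $t\to 1^{+}$, where $c\to\infty$; writing $M=c^p$ and expanding both sides to first order in $1/M$ gives
\[
(c^p-2t+3)^{p-1}-c^{p}(t^p+c^p-t)^{p-2}
= M^{p-2}\big[(p-1)(3-2t)-(p-2)(t^p-t)\big]+o\!\left(M^{p-2}\right),
\]
whose bracket tends to $p-1>0$ as $t\to1$. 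This settles part $(2)$ (fixed $p\in(1,2)$, $n$ large) and the large-$n$ portion of part $(1)$.

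The main obstacle is the non-asymptotic range required for part $(1)$: proving the displayed inequality for \emph{every} admissible $t$ corresponding to $n\ge 10$ when $p\ge 2$, with equality only at $(p,t)=(2,3/2)$. The difficulty is controlling the competition between the two exponents $p-1$ and $p-2$ across the whole interval, rather than only near $t=1$. I would treat the difference of the two sides as a function of $t$ for each fixed $p\ge 2$ and establish strict positivity by monotonicity, using the exactly solvable slice $p=2$ as an anchor and monotonicity in $p$ at the endpoint to show no second crossing can occur. Verifying this monotonicity—equivalently, that the single zero of the difference is pinned at $(p,n)=(2,10)$—is the delicate step; the asymptotic expansion above guarantees the correct sign near $t=1$ and the $p=2$ computation fixes the boundary, so the remaining work is to propagate these facts over the full admissible range.
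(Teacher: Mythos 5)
Your reduction of $\lambda^{(p)}(S_{n-1}^+)$ to the one-parameter system $c=t/(t^{p-1}-1)$, $c^p=2t+N$ is correct and equivalent to the paper's eigenequation setup, your closed form for $\lambda^{(p)}(S_n)$ is right, and your asymptotic expansion in $1/M$ does settle part $(2)$ (modulo a one-line justification that $n\to\infty$ forces $t\to 1^+$, which follows since $t$ bounded away from $1$ would give $c^p\sim t^{p(2-p)}=o(t)$, contradicting $c^p=2t+N$). The $p=2$ slice is also handled correctly, including the exceptional pair $(2,10)$.

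The genuine gap is exactly where you flag it: part $(1)$ for $p>2$ over the whole range $n\ge 10$. You propose to "propagate by monotonicity" the sign of $(c^p-2t+3)^{p-1}-c^p(t^p+c^p-t)^{p-2}$ from the anchors at $p=2$ and $t\to 1^+$, but this is not a routine verification --- the margin vanishes at $(p,n)=(2,10)$, so any argument must control the sign of the $p$-derivative at the equality point, and the competition between the exponents $p-1$ and $p-2$ that you identify as "delicate" is precisely the unproved content. The missing idea is \autoref{lem:nonincreasing} (Nikiforov): for an $r$-graph with $m$ edges, $\bigl(\lambda^{(p)}(H)/(rm)\bigr)^p$ is nonincreasing in $p$, with equality only if all edge products of the eigenvector coincide. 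Applied to $S_{n-1}^+$ (which has $m=n-1$ edges) this gives
\[
\Bigl(\tfrac{\lambda^{(p)}(S_{n-1}^+)}{2(n-1)}\Bigr)^p<\Bigl(\tfrac{\lambda^{(2)}(S_{n-1}^+)}{2(n-1)}\Bigr)^2\le\tfrac{1}{4(n-1)},
\]
for $p>2$ and $n\ge 10$ (strictness holds because the edge $v_2v_3$ has product $z^2$ while a pendant edge has product $xy$ with $z>y$), and the right-hand side is exactly $\bigl(\lambda^{(p)}(S_n)/(2(n-1))\bigr)^p$ for every $p$, since $\lambda^{(p)}(S_n)=2^{1-2/p}(n-1)^{1-1/p}$. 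Thus the entire $p>2$ case collapses to the $p=2$ computation you already did, with no need to analyze the two-exponent inequality across the admissible range. Without this (or an equivalent $p$-monotonicity input), your proof of part $(1)$ is incomplete.
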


\begin{proof}
(1). Assume $p=2$. It is clear that $\lambda^{(2)}(S_n)=\sqrt{n-1}$, and $\lambda^{(2)}(S_{n-1}^+)$
is the largest root of the following equation
\[
x^3-x^2-(n-2)x+n-4=0.
\]
After some algebra we have $\lambda^{(2)}(S_{n-1}^+)<\sqrt{n-1}=\lambda^{(2)}(S_n)$ for
$n\geq 11$ and $\lambda^{(2)}(S_{n-1}^+)=\lambda^{(2)}(S_n)$ for $n=10$.

Now we assume $p>2$. In light of \autoref{lem:nonincreasing} we see
\[
\bigg(\frac{\lambda^{(p)}(S_{n-1}^+)}{2(n-1)}\bigg)^p<
\bigg(\frac{\lambda^{(2)}(S_{n-1}^+)}{2(n-1)}\bigg)^2\leq
\frac{1}{4(n-1)},
\]
which yields that $\lambda^{(p)}(S_{n-1}^+)<2^{1-2/p}(n-1)^{1-1/p}$. Noting that
$\lambda^{(p)}(S_n)=2^{1-2/p}(n-1)^{1-1/p}$ (see also \cite[Proposition 4.9]{Nikiforov2014:analytic-methods}),
we get $\lambda^{(p)}(S_n)>\lambda^{(p)}(S_{n-1}^+)$.

(2). Let $1<p<2$.
Suppose that $v_1$ is the vertex of $S_{n-1}^+$ with degree $n-2$, and $v_2v_3$,
$v_1v_i$ are the edges of $S_{n-1}^+$, $i=2$, $3$, $\ldots$, $n-1$. Let
$\bm{x}\in\mathbb{S}_{p,++}^{n-2}$ be the eigenvector corresponding to $\lambda^{(p)}(S_{n-1}^+)$.
By symmetry,
\[
x_{v_2}=x_{v_3},~x_{v_4}=x_{v_5}=\cdots=x_{v_{n-1}}.
\]
For short, set $x:=x_{v_1}$, $y:=x_{v_4}$, $z:=x_{v_2}$ and $\lambda:=\lambda^{(p)}(S_{n-1}^+)$.
By eigenequations \eqref{eq:eigenequation}, we see
\begin{equation}\label{eq:eigenequation-lambda}
\begin{cases}
  \lambda x^{p-1}=2z+(n-4)y, \\
  \lambda y^{p-1}=x, \\
  \lambda z^{p-1}=x+z.
\end{cases}
\end{equation}

In what follows, we tacitly assume that $n$ is large enough. Since
$\lambda>\lambda^{(p)}(S_{n-1})=2^{1-2/p}(n-2)^{1-1/p}$ and $\lambda z^{p-1}=x+z$,
we have
\[
z<\Big(\frac{2}{\lambda}\Big)^{1/(p-1)}<\frac{4^{1/(p^2-p)}}{(n-2)^{1/p}},
\]
which implies that $z=O(n^{-1/p})$. Setting $\varepsilon:=z^{2-p}/\lambda$, then
$\varepsilon=O(n^{-1/p})$.

Solving system \eqref{eq:eigenequation-lambda}, we find that
\begin{equation}\label{eq:lambda^p}
\begin{split}
\lambda^p
 & =\frac{(n-4)(1-z^{2-p}/\lambda)^{1/(p-1)}+2}{(1-z^{2-p}/\lambda)^{p-1}}\cdot z^{p(2-p)} \\
 & =\frac{(n-4)(1-\varepsilon)^{1/(p-1)}+2}{(1-\varepsilon)^{p-1}}\cdot z^{p(2-p)}.
\end{split}
\end{equation}
In view of $||\bm{x}||_p=x^p+(n-4)y^p+2z^p=1$ and \eqref{eq:eigenequation-lambda}, we have
\[
\lambda^pz^{p(p-1)}(1-\varepsilon)^p+
(n-4)z^p(1-\varepsilon)^{p/(p-1)}+2z^p=1.
\]
Combining \eqref{eq:lambda^p} we obtain
$2z^p[(n-4)(1-\varepsilon)^{p/(p-1)}-\varepsilon+2]=1$.
Hence,
\begin{align*}
z^p & =\frac{1}{2[(n-4)(1-\varepsilon)^{p/(p-1)}-\varepsilon+2]} \\
    & =\frac{1+p/(p-1)\varepsilon+O(\varepsilon^2)}{2(n-2)}.
\end{align*}
By some algebra, we immediately have
\begin{equation}\label{eq:z^(2p-p2)}
z^{p(2-p)}=\frac{1+(2p-p^2)/(p-1)\varepsilon+O(\varepsilon^2)}{2^{2-p}(n-2)^{2-p}}.
\end{equation}

In the following we shall give an estimation of $\lambda^p/(\lambda^{(p)}(S_n))^p$ by \eqref{eq:lambda^p} and \eqref{eq:z^(2p-p2)}. Recall that $\lambda^{(p)}(S_n)=2^{1-2/p}(n-1)^{1-1/p}$,
we deduce that
\begin{align*}
\frac{\lambda^p}{(\lambda^{(p)}(S_n))^p} & =
\frac{(n-4)(1-\varepsilon)^{1/(p-1)}+2}{2^{p-2}(n-1)^{p-1}(1-\varepsilon)^{p-1}}\cdot z^{p(2-p)} \\
& =\frac{(n-2)[1-(2p-p^2)/(p-1)\varepsilon+O(\varepsilon^2)]}{2^{p-2}(n-1)^{p-1}}\cdot z^{p(2-p)} \\
& =(n-2)\Big[1-\frac{2p-p^2}{p-1}\varepsilon+O(\varepsilon^2)\Big]\cdot z^{p(2-p)} \\
& =\left(1-\frac{1}{n-1}\right)^{p-1}(1+O(\varepsilon^2))\\
& =1-\frac{p-1}{n-1}+O(n^{-2/p})\\
& <1,
\end{align*}
which yields that $\lambda<\lambda^{(p)}(S_n)$. The proof of the lemma is completed.
\end{proof}

We are now ready to prove \autoref{thm:main-result-star}. \par\vspace{2mm}

\noindent{\bfseries Proof of \autoref{thm:main-result-star}.}
Assume $p\geq 1$ and $c$ is the center of $S_k$. Clearly, all edges of Berge-$S_k$ hypergraph
contain vertex $c$. If $p=1$, there is a hypergraph $G^*\ast c\in\mathcal{M}^3(S_k)$ such that
$v(G^*\ast c)\leq k+1$ by \autoref{lem:v(H)-leq-v(G)+r-2}. Hence, $G^*$ is either a tree or a
unicyclic graph. It follows from Motzkin--Straus theorem and \autoref{lem:H-vee-K1} that
$\lambda^{(1)}(H)\leq\lambda^{(1)}(S_{k-1}^+\ast K_1)$ for any Berge-$S_k$ hypergraph $H$. In the
following we assume $p>1$ and $H=G^*\ast c\in\mathcal{M}^3(S_k)$.

\begin{claim}\label{claim:v(H)=k-k+1}
$v(H)\in\{k,k+1\}$.
\end{claim}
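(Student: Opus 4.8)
The plan is to transfer everything to the core graph $G^*$, which is an ordinary $2$-uniform graph. Since $\phi$ is a bijection onto $E(H)$ and every edge of $S_k$ passes through the center $c$, the core $G^*$ is a graph with exactly $e(G^*)=|E(S_k)|=k-1$ edges, and $v(H)=v(G^*)+1$. Moreover $\phi$ assigns to each of the $k-1$ edges of $G^*$ a \emph{distinct} representative vertex contained in it, namely the image of the corresponding leaf of $S_k$; conversely, any graph with $k-1$ edges admitting such a system of distinct representatives is the core of a Berge-$S_k$ hypergraph. By \autoref{lem:H-vee-K1} we have $\lambda^{(p)}(H)=\gamma_p\,\lambda^{(p)}(G^*)$ for a positive constant $\gamma_p$ depending only on $p$, so $H\in\mathcal{M}^3(S_k)$ forces $G^*$ to maximize $\lambda^{(p)}$ among all such cores.

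First I would dispatch the lower bound. The $k-1$ edges of $G^*$ have $k-1$ pairwise distinct representatives, which are $k-1$ distinct vertices of $G^*$; hence $v(G^*)\ge k-1$ and $v(H)=v(G^*)+1\ge k$. (Here I use the convention that members of $\mathcal{M}^3(S_k)$ carry no isolated vertices, so that $V(H)$ is the support of the edges; isolated vertices would not change $\lambda^{(p)}$ and may be discarded.) For the upper bound it suffices to prove that $G^*$ is \emph{connected}: a connected graph with $k-1$ edges has at most $k$ vertices, so $v(G^*)\le k$ and $v(H)\le k+1$.

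To establish connectivity I would argue by contradiction. Suppose $G^*$ is disconnected. Since the $p$-spectral radius of a graph equals the maximum of the $p$-spectral radii of its components, choose a component $G_1$ with $\lambda^{(p)}(G_1)=\lambda^{(p)}(G^*)$; because every component carries at least one edge, $e(G_1)\le k-2$. As $p>1=r-1$ for $r=2$ and $G_1$ is connected, \autoref{thm:positive-vector-p>r-1} gives a positive eigenvector $\bm{y}>\bm{0}$ of $G_1$; let $z$ be a vertex of maximum weight, so $y_z>0$. Form $G^\sharp$ from $G_1$ by attaching $k-1-e(G_1)$ pendant edges at $z$. Then $G^\sharp$ is connected with exactly $k-1$ edges, and it admits a system of distinct representatives (restrict that of $G^*$ to the edges of $G_1$, and let each pendant edge represent its new endpoint), so $G^\sharp\ast c$ is a Berge-$S_k$ hypergraph. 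Placing a small mass $\varepsilon>0$ on each new pendant vertex and rescaling $\bm{y}$ to keep the $p$-norm equal to $1$ raises the objective $2\sum_{uv\in E}y_uy_v$ by a term of order $\varepsilon\,y_z>0$, while the old contribution decreases only by a term of order $\varepsilon^{p}=o(\varepsilon)$ (as $p>1$); hence $\lambda^{(p)}(G^\sharp)>\lambda^{(p)}(G_1)=\lambda^{(p)}(G^*)$, and therefore $\lambda^{(p)}(G^\sharp\ast c)>\lambda^{(p)}(H)$ by \autoref{lem:H-vee-K1}, contradicting $H\in\mathcal{M}^3(S_k)$. Thus $G^*$ is connected and $v(H)\in\{k,k+1\}$.

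The main point to get right is that all of this must work in the full range $p>1$, including $1<p\le 2$: the strict gain cannot come from the hypergraph operations of Section~\ref{sec2} (which yield strictness only for $p>r-1=2$), but instead from working on the $2$-uniform core, where already $p>1=r-1$ secures positivity of the eigenvector via \autoref{thm:positive-vector-p>r-1} and a strict first-order pendant-edge perturbation. A secondary point is to ensure that each intermediate graph remains a legitimate Berge-$S_k$ core; this is exactly what the pendant-edge reconstruction guarantees, whereas a naive vertex-merging of two components could destroy the distinct-representative condition and so need not produce a Berge-$S_k$ hypergraph.
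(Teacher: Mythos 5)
Your strategy is genuinely different from the paper's: the paper obtains positivity of the eigenvector by applying \autoref{thm:positive-vector-p>r-1} to the $2$-uniform core (where $p>1=r-1$ suffices) and then reruns the vertex-merging construction of \autoref{lem:v(H)-leq-v(G)+r-2}, whereas you prove directly that the core $G^*$ is connected and count $v(G^*)\le e(G^*)+1=k$. For $1<p\le 2$ your argument is sound, and it even confronts a point the paper passes over (a disconnected $G^*$). But there is a genuine gap for $p>2$: the assertion that the $p$-spectral radius of a disconnected graph equals the maximum of the $p$-spectral radii of its components is false in that range. For $p>2$ one has (Nikiforov) $\lambda^{(p)}(G^*)^{p/(p-2)}=\sum_i\lambda^{(p)}(G_i)^{p/(p-2)}$ over the components $G_i$, which strictly exceeds $\max_i\lambda^{(p)}(G_i)$ as soon as two components contain edges; concretely, two disjoint edges with $p=4$ give $\lambda^{(4)}=2$ while a single edge gives $\sqrt{2}$. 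Hence no component $G_1$ with $\lambda^{(p)}(G_1)=\lambda^{(p)}(G^*)$ need exist, and your chain $\lambda^{(p)}(G^\sharp)>\lambda^{(p)}(G_1)=\lambda^{(p)}(G^*)$ collapses precisely in the range where you claim the argument still "must work."

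The gap is easily repaired by a case split you did not make: for $p>2=r-1$ the second assertion of \autoref{lem:v(H)-leq-v(G)+r-2} already gives $v(H)\le v(S_k)+r-2=k+1$ for every $H\in\mathcal{M}^3(S_k)$, so your component argument is only needed for $1<p\le 2$, where the maximum-over-components identity does hold (for $p\le 2$ the map $t\mapsto t^{2/p}$ is superadditive on the component masses $t_i=\lVert\bm{x}|_{V(G_i)}\rVert_p^p$, so the optimum concentrates on one component). With that restriction stated, the remaining steps --- the system-of-distinct-representatives characterization of Berge-$S_k$ cores, the pendant-edge reconstruction of a connected core with $k-1$ edges, and the first-order perturbation using $\varepsilon^{p}=o(\varepsilon)$ --- are all correct, and the trivial lower bound $v(H)\ge k$ is fine.
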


\noindent\textit{Proof of \autoref{claim:v(H)=k-k+1}.}
Let $\bm{x}\in\mathbb{S}_{p,+}^{v(H)-1}$ be an eigenvector corresponding to $\lambda^{(p)}(H)=\lambda^{(p)}(G^*\ast c)$.
By \autoref{lem:H-vee-K1}, $3/2\,\bm{x}|_{V(G^*)}$ is an eigenvector corresponding to $\lambda^{(p)}(G^*)$.
In view of \autoref{thm:positive-vector-p>r-1}, $\bm{x}|_{V(G^*)}$ is a positive vector, and then
$\bm{x}\in\mathbb{S}_{p,++}^{v(H)-1}$. Using the same argument as \autoref{lem:v(H)-leq-v(G)+r-2},
we see $v(H)\in\{k,k+1\}$, completing the proof of the claim. \par\vspace{2mm}

Now, we consider the following two cases.

{\bfseries Case 1.} $v(H)=k$.
Then $G^*$ is a unicyclic graph. Hence, $\lambda^{(p)}(G^*)< \lambda^{(p)}(S_{k-1}^+)$ unless $G^*=S_{k-1}^+$.

{\bfseries Case 2.} $v(H)=k+1$. Then $G^*$ is a tree. It follows that
$\lambda^{(p)}(G^*)<\lambda^{(p)}(S_k)$ unless $G^*=S_k$.

According to Case 1 and Case 2, we have
\[
\lambda^{(p)}(G^*)\leq\max\{\lambda^{(p)}(S_{k-1}^+),~\lambda^{(p)}(S_k)\}.
\]
The result follows from \autoref{lem:Sn>Sn^+} and \autoref{lem:H-vee-K1}.
\hfill\ensuremath{\Box}

\begin{remark}
For a graph $G$ and let $r\geq 3$ be an integer. The $r$-uniform {\em expansion} of $G$ is the
$r$-uniform hypergraph obtained from $G$ by enlarging each edge of $G$ with $(r-2)$ new vertices
disjoint from $V(G)$ such that distinct edges of $G$ are enlarged by distinct vertices. By the
same argument as \cite[Theorem 3.8]{LiuLu2018:alpha-normal-method}, the $r$-uniform expansion
of $G$ attains the minimum $p$-spectral radius among Berge-$G$ hypergraphs $\mathcal{B}_r(G)$
for $p>r-1$.
\end{remark}

\section*{Acknowledgements}

The research of the first author was supported in part by the National Nature Science Foundation
of China (No. 11471210). The second author thanks the support of the fund from the China Scholarship
Council (No. 201706890045) when this author visited the University of South Carolina during September
2017\,--\,March 2019. The research of the third author was supported in part by NSF grant DMS-1600811
and ONR grant N00014-17-1-2842.

\end{document}